\newtheorem{theorem}{Theorem}[section]
\newtheorem{lemma}[theorem]{Lemma}
\definecolor{bbm}{RGB}{51,153,0}
\definecolor{above}{RGB}{128,0,128}
\definecolor{below}{RGB}{102,0,204}
\definecolor{cascade}{RGB}{204,0,0}
\definecolor{iid}{RGB}{153,51,0}
\theoremstyle{remark}
\newtheorem*{remark}{Remark}
\def\paragraph#1{\noindent \textbf{#1}}
\numberwithin{equation}{section}
\def\d{\mathrm{d}}
\def\<{\langle}
\def\>{\rangle}
\def\a{\alpha}
\def\b{\beta}
\def\e{\epsilon}
\def\g{\gamma}
\def\l{\lambda}
\def\o{\omega}
\def\G{\Gamma}
\def\del{\partial}
\def\R{{\Bbb R}}  
\def\P{{\Bbb P}}
\def\E{{\Bbb E}}
 \def \G {{\Gamma}}
 \def \b {{\beta}}
 \def \g {{\gamma}}
 \def \l {{\lambda}}
 \def \d {{\delta}}
 \def \a {{\alpha}}
 \def \o {{\omega}}
 \def \del {{\partial}}
 \def \ba {\begin{array}}
 \def \ea {\end{array}}
 \newcommand{\be}{\begin{equation}}
 \newcommand{\ee}{\end{equation}}
\newcommand{\bea}{\begin{eqnarray}}
 \newcommand{\eea}{\end{eqnarray}}
\def\TH(#1){\label{#1}}\def\thv(#1){\ref{#1}}
\def\Eq(#1){\label{#1}}\def\eqv(#1){(\ref{#1})}
\def\sfrac#1#2{{\textstyle{#1\over #2}}}
 \def \1{\mathbbm{1}}
\def \zet {{\mathfrak z}}
\def\eee{{\mathrm e}}
\def \log{\ln}
 \def\tb{\tilde \beta}
 \def\tg{\tilde\gamma}
\begin{document}

 \title[Speed of invasion]{The speed of invasion in an advancing population}
\author[A. Bovier]{Anton Bovier}
 \address{A. Bovier\\Institut f\"ur Angewandte Mathematik\\
University of Bonn\\ Endenicher Allee 60\\ 53115 Bonn, Germany }
\email{bovier@uni-bonn.de}
\author[L. Hartung]{Lisa Hartung}
 \address{L. Hartung\\
  Institut für Mathematik \\Johannes Gutenberg-Universität Mainz\\
Staudingerweg 9,
55099 Mainz, Germany}
\email{lhartung@uni-mainz.de}

\date{\today}

 \begin{abstract}  
 We derive rigorous estimates on the speed of invasion of an advantageous trait in a spatially advancing population in the context of a system of one-dimensional
  F-KPP equations. The model was introduced and studied heuristically and numerically in a paper by Venegas-Ortiz et al \cite{VAE2014}. In that paper, it was noted that the speed of invasion by the mutant trait is faster when the resident population is expanding in space compared to the speed when the resident population is already present everywhere. We use the Feynman-Kac representation to provide rigorous estimates that confirm these predictions.   
%  \begin{center}
%  {\it This paper is dedicated to the memory of our dear friend Dima Ioffe}
%  \end {center}
 \end{abstract}

\thanks{
This work was partly funded by the Deutsche Forschungsgemeinschaft (DFG, German Research Foundation) under Germany's Excellence Strategy - GZ 2047/1, Projekt-ID 390685813 and GZ 2151 - Project-ID 390873048,
through Project-ID 211504053 - SFB 1060, through Project-ID 233630050 -TRR 146, through Project-ID 443891315  within SPP 2265, and Project-ID 446173099. The thank the Club Santa Rosa on Lanzarote for providing the excellent environment for writing the bulk of this paper. 
 Data sharing not applicable to this article as no datasets were generated or analysed during the current study.
 }

\subjclass[2000]{60J80, 60G70, 	35C07, 	92D25 } \keywords{F-KPP equations, invading traits, travelling waves, Feynman-Kac representation} 

 \maketitle

\section{Introduction} 
The present paper is motivated by an interesting paper by Venegas-Ortiz, Allen, and Evans \cite{VAE2014} that investigates the invasion of a spatially expanding population by a new trait. The classical model for the invasion of a gene in a spatially extended population \cite{fisher37} or the expansion of a population in space \cite{kpp} is the 
Fisher-Kolmogorov-Petrovsky-Piscounov (F-KPP) equation, that has been the subject of
intense investigation for over 80 years\footnote{A superficial search for ``KPP" in MathSciNet finds over 500 entries since 1967 alone.}. The F-KPP equation is a non-linear 
reaction-diffusion equation that admits travelling wave solutions to which solutions starting with suitable initial conditions converge. This has been known since the early work of 
Kolmogorov et al.\ \cite{kpp}, but has been made both more precise and more general in the seminal book  by Bramson \cite{B_C}. 

The model discussed in \cite{VAE2014} is a system of two coupled equations of the F-KPP type that describes the evolution of a population of two types (traits, alleles, ..) that 
diffuse, compete, and and switch between types. More specifically, they propose the system of equations
% The system of pdes proposed in \cite{VAE2014} reads 
 \bea\Eq(fkpp.1)
  \del_t N_A&=&\frac 12 \del_{xx} N_A +\a N_A(K-N_A-N_B)-\b N_A+\g N_AN_B,\\
  \del_t N_B&=&\frac 12 \del_{xx} N_B +\a N_B(K-N_A-N_B)+\b N_A-\g N_AN_B.
 \Eq(fkpp.2)
 \eea
 $N_A,N_B$ represent the masses of traits $A$ and $B$, $K$ is the carrying capacity,
 $\a,\b,\g$ are parameters that satisfy
 \be
 \Eq(fkpp.3)
 \a>\g>\b/K\geq 0.
 \ee
 The different terms in these equations correspond to the following biological 
mechanisms:
\begin{itemize} 
\item[(i)] The terms $\del_{xx} N$ model the spatial diffusion of the population. Note that the diffusion coefficients are the same for both types. This can be seen as biologically plausible, but this choice is mainly done to simplify the mathematical treatment.
\item [(ii)] The  terms proportional to $\a$ describe logistic growth with the quadratic terms corresponding to competitive pressure. Again it is assumed that the pressure exerted by both types and on each type are the same. This again simplifies the 
mathematics.
\item[(iii)] The linear terms $\pm \b N_A$ can be interpreted as mutation rates from the 
$A$ population to the $B$ population. There effect is a net disadvantage of the $A$ population.
\item[(iv)] The non-linear terms $\pm \g N_AN_B$ are interpreted as horizontal gene transfer from the $B$-types to the $A$-types. The idea is that when an $A$ individual
encounters a $B$ individual, the genotype of the $B$ individual can be switched to the
$A$-type. 
\end{itemize}
The choice of parameters in \eqv(fkpp.3) ensures that the a priori disadvantaged $A$ type can reemerge in a developed $B$-population and a stable equilibrium 
with co-existing types exists. The question addressed  in \cite{VAE2014} is to analyse
how this effect leads to a hitchhiking of the $A$-type when the $B$-type is 
spreading in space.
  The authors of 
\cite{VAE2014} make the following interesting and somewhat surprising observation.
There are two easily derived travelling waves in the system. First, a population made purely of $B$ individuals remains in that state and advances with a speed $v_B$.
Second, if $B$ has invaded all space, and a $A$ population is introduced, there is 
(with the choice of parameters that ensures the instability of the $B$ population against the invasion of $A$ individuals) a travelling wave of $A$ particles that advances in the background of $B$ particles with a 
speed $v_A<v_B$. If, however, one starts with initial conditions where $A$ and $B$ particles are present, say in the negative half-line, then the $B$ population advances 
with speed $v_B$ again, but in some parameter range the $A$ population advances with a speed $v_c$ that is
strictly larger than the speed $v_A$ (and smaller than $v_B$). Somehow, the $A$ individuals sense the empty space ahead of the $B$-wave and get attracted 
to it. 

Venegas-Ortiz et al \cite{VAE2014} derive this result, and precise formulas for the speeds,
using local linearisation and matching of solutions. These findings are supported by numerical 
simulations. In the present paper we derive rigorous
estimates on the speeds using the Feynman-Kac representation, originally 
employed by Bramson \cite{B_C} to control the precise speed of convergence to the travelling wave in the original F-KPP equation. It turns out that this point of 
view not only allows to  give rigorous and precise bounds on the solutions of 
the system of equations, and hence the speeds, 
but also provides a clear and intuitive explanation for the fact that the empty space ahead
of the $B$-wave allows for a faster advance of the $A$-wave. Namely, we will see that 
this is driven by large, unlikely excursion of the Brownian motion  in the Feynman-Kac 
formula that reach ahead of the front of the $B$-wave. Mathematically, this involves some
delicate estimates  on probabilities of large excursions of Brownian bridges.

Systems of coupled F-KPP  equations have   been studied in different contexts in the literature, see e.g.  \cite{GL2019,CHTWW95,Holz2012,HS2012,HS2014,Holz2016,faye2018,keenan2021,LamYu21}. 
In particular, an analogous result to that in \cite{VAE2014} and the present paper was 
derived rigorously in \cite {HS2014}  using analytic methods.
Rather recently, there has been interest in such systems in the context of dormancy, see e.g. \cite{BHN22}. Applicable tools depend on the details of the equations. 
\cite{GL2019} use purely analytic methods involving sub- and super-solutions, while the equations appearing in \cite{CHTWW95} and
\cite{BHN22} allow for a representation in terms of branching Brownian motion and the use of martingale methods. The equations in 
\cite{VAE2014} (and \cite{HS2014,Holz2016,faye2018})  are particularly nice, as they allow for the use of the Feynman-Kac representation. However, even the introduction of 
two different diffusion constants seems to spoil this feature, and it seems unclear (albeit interesting) to see how this method can be extended to more general settings.

\paragraph{Outline.} The remainder of this paper is organised as follows. In Section 2 we give a precise
formulation of the model put forward in \cite{VAE2014}  and explain the special structure of the system that effectively reduces the problem to a time-dependent one-dimensional 
F-KPP equation. Afterwards we state our main result. Along the way we also recall some background on the standard F-KPP equation that will be needed. In Section 3 we present the Feynman-Kac representation, derive some first bounds, and give a heuristic explanation of the main result, based on the Feynman-Kac representation. Section 4 provides the necessary upper and lower bounds on the excursions of Brownian bridges. We compute fairly sharp 
bounds on the Laplace transforms of these excursions using the Laplace method.
Armed with these estimates, we derive upper and lower bounds on solutions from which 
the wave speed $v_c$ is inferred in Section 5. At the end of the paper, in  Section 6,
we discuss our results and point to possible future extensions.

\section{The F-KPP equations}
 
 It is convenient to introduce the total population mass $N_T\equiv N_A+N_B$
 and to write the equations \eqv(fkpp.1) and \eqv(fkpp.2)  in the the form 
 \bea\Eq(fkpp.4)
  \del_t N_T&=&\frac 12 \del_{xx} N_T +\a N_T(K-N_T),\\
 \del_t N_A&=&\frac 12 \del_{xx} N_A +\a N_A(K-N_T)-\b N_A+\g N_A(N_T-N_A).
 \Eq(fkpp.5)
 \eea
 We see that $N_T$ satisfies an autonomous F-KPP equation. Effectively, the second equation is a F-KPP equation with time dependent reaction rates. This structure is crucial for our analysis building on the Feynman-Kac formula. Equations of a similar structure have been also been studied in \cite{Holz2016,faye2018}. It is furthermore convenient to eliminate the parameters $K$ and $\a$ by rescaling.
 We define
 \bea
\Eq(fkpp.6)
v(t,x)&\equiv&\frac 1K N_T(t/(\a K), x/\sqrt{\a K}),\\
w(t,x)&\equiv&\frac 1K N_A(t/(\a K), x/\sqrt{\a K}).\Eq(fkpp.6.1)
\eea
Then $v$ and $w$ solve
\bea\Eq(fkpp.7)
\del_t v&=&\frac 12\del_{xx} v+v(1-v),
\\
\Eq(fkpp.8)
\del_t w&=&\frac 12\del_{xx} w +\left(1-\tb-(1-\tg)v-\tg w\right)w.
\eea
where $\tb=\b/(\a K)$ and $\tg=\g/\a$. Note that $1>\tg>\tb>0$.

Note that the system of equations has four spatially constant fixpoints:
\begin{itemize}
\item[(i)] $v=0, w=0$,
\item[(ii)] $v=0, w=(1-\tb)/\tg$,
\item[(iii)] $v=1,w=0$,
\item[(iv)] $v=1,w=1-\tb/\tg$.
\end{itemize}
The fixpoint (ii) is unphysical, since it corresponds to a negative mass for the population $B$. The fixpoints (i) and (iii) are unstable, and (iv) is the stable fixpoint.

The behaviour of $v$ is well-known from Bramson's work \cite{B_C}, so solving for 
$w$ amounts to solve the F-KPP equation with time dependent coefficients. A particularly simple situation arises if we choose initial  conditions such that 
$v(0,x)=1$, for all $x\in \R$. In that case $w$ solves the F-KPP equation 
\be
\Eq(fkpp.10)
\del_t w=\frac 12\del_{xx} w +\left(\tg-\tb-\tg w\right)w.
\ee
In this case, with suitable initial conditions (e.g. Heaviside), $w$ converges to a travelling wave
solution that moves with speed $\sqrt {2(\tg-\tb)}$. A more interesting situation arises 
if the initial conditions are such that $v(0,x)$ decays rapidly at $+\infty$ 
 and $w(0,x)$ is
non-zero. In that case, \cite{VAE2014} observed that 
the $w$-wave follows behind the $v$-wave, but moves faster than it would in 
a fully established population. 
Recall that the standard F-KPP equation \eqv(fkpp.7) admits travelling wave solutions
\be
v(t,x+\l t)=\o(x),
\Eq(fkpp.11)
\ee
where $\o$ solves the ode
\be\Eq(fkpp.12)
\frac 12\del_{xx} \o +\l\del_x \o +\o(1-\o)=0,
\ee
for all speeds $\geq \sqrt 2$.  
It was shown by Kolmogorov \cite{kpp} that \eqv(fkpp.12) has a unique solution up to 
translations such that $\lim_{x\downarrow-\infty}\o(x)=1$ and 
 $\lim_{x\uparrow\infty}\o(x)=0$. 
 We are only interested in the case $\l=\sqrt 2$, since solutions with initial condition that 
 converge rapidly to zero at infinity, and in particular with Heaviside initial conditions, 
 converge to travelling waves with this speed (see \cite{B_C} for more details).  
  
 We pick the solution for which $\o(0)=1/2$. 
 Lalley and Sellke \cite{LS} derived the probabilistic representation 
 \be
\Eq(fkpp.13)
1-\o(x)= \E\left[\eee^{-ZC\eee^{-\sqrt{2\pi}x}}\right],
\ee 
where $Z$ is a random variable, the limit of the so-called  \emph{derivative martingale},
and $C$ is a constant such that
\be
 \E\left[\eee^{-ZC}\right]=\frac 12.
\ee
Clearly, if $v$ solves \eqv(fkpp.7) with initial condition 
$v(0,x) = \o(x+a)$, then $v(t,x)=\o(x+a-\sqrt 2 t)$.
It is known that
\be
\Eq(fkpp.14) 
\o(x) \sim C x \eee^{-\sqrt 2 x},\quad \text{as}\;  x\uparrow +\infty,
\ee
and
\be
\Eq(fkpp.15) 
\o(x) \sim 1- c  \eee^{(2-\sqrt  2) x},\quad \text{as}\;  x\downarrow -\infty.
\ee
(\eqv(fkpp.14) is due to Bramson, \eqv(fkpp.15) is proven in the first arXiv version 
of \cite{ABK_G}). 
Bramson has shown that for any initial conditions that decay faster than $\eee^{-\sqrt 2x}$ at $+\infty$, 
\be
\Eq(fkpp.16)
v(t,x-m(t)) \rightarrow \o(x),
\ee
uniformly in $x$, as $t\uparrow\infty$, where 
\be\Eq(fkpp.17)
m(t)=\sqrt 2 t-\frac 3{2\sqrt 2}\ln t.
\ee
It will be convenient to analyse the system \eqv(fkpp.7), \eqv(fkpp.8) with initial conditions
$v(0,x)=\o(x+a)$ and $w(0,x) =(1-\tb/\tg) \1_{x\leq 0}$. With this choice, our problem is reduced
to studying the scalar equation 
\be
\Eq(fkpp.18)
\del_t w(t,x)=\frac 12\del_{xx} w(t,x) +\left(1-\tb-(1-\tg)\o\left(x+a-\sqrt  2t\right)-\tg w(t,x)
\right)w(t,x),
\ee
with initial condition $w(0,x) =(1-\tb/\tg) \1_{x\leq 0}$.

Our main result %, confirming predictions made by \cite{VAE2014}, 
is the following.
\begin{theorem} \TH(main)
Let $a\in\R_+$. Let
\be\Eq(lisa.lan23)
u_c\equiv \max\left(\sqrt{2}-\frac{\tb}{\sqrt{2}\tg} \left(1+\sqrt{1-\tg}\right),\sqrt{2\left(\tg-\tb\right)}\right)
\ee
Then for all $\delta>0$ sufficiently small there exist constants $C_1,C_2>0$ such that
\be\Eq(lisa.lan24)
w(t, u_ct-C_1\log t)>\delta 
\ee
and
\be\Eq(lisa.lan25)
w(t, u_ct+C_2\log t+z)<1/t, \quad \forall z>0,
\ee
for all $t$ large enough.
\end{theorem}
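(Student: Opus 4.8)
\emph{Proof strategy.} The plan is to run everything through the Feynman--Kac representation of the scalar equation \eqref{fkpp.18}. Since $w\ge0$ and $-\tg w^2\le0$, the function $w$ lies below the solution $\phi$ of the linear equation $\del_t\phi=\tfrac12\del_{xx}\phi+k(t,x)\phi$ with $k(t,x)=1-\tb-(1-\tg)\o(x+a-\sqrt2 t)$ and the same Heaviside datum, and reversing time in the Feynman--Kac formula gives
\be\Eq(prop.fk)
\phi(t,x)=\left(1-\tfrac{\tb}{\tg}\right)\E_x\!\left[\exp\!\left(\int_0^t\!\big(1-\tb-(1-\tg)\o\big(\beta_s+a-\sqrt2(t-s)\big)\big)\,ds\right)\1_{\{\beta_t\le0\}}\right],
\ee
with $(\beta_s)_{s\le t}$ a Brownian motion started at $x$. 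By \eqref{fkpp.14}--\eqref{fkpp.15} one may replace $\o$ by the indicator $\1_{(-\infty,0)}$ up to controllable errors, so the exponent is essentially $(1-\tb)t$ minus $(1-\tg)$ times the time the path spends \emph{behind} the receding front $\gamma(s)=\sqrt2(t-s)-a$, where it collects only the rate $\tg-\tb$. Since $u_c<\sqrt2$, a path ending near $0$ and evaluated at a point $\approx u_ct$ starts behind this front, and the whole problem reduces to sharp estimates on how such a path splits its time on the two sides of this linearly receding barrier --- which is where excursions of Brownian bridges enter.

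\emph{Upper bound, \eqref{lisa.lan25}.} Fix $x=u_ct+C_2\log t+z$ with $z>0$ and split the expectation in \eqref{prop.fk} according to the fraction $\theta$ of $[0,t]$ the path spends behind $\gamma$. Conditioned on that, the cheapest way to realise $\beta_0=x$ and $\beta_t\le0$ is to travel essentially straight from $x$ until the path meets the receding front, at time $\theta t$ --- collecting only rate $\tg-\tb$ --- and then to ride just ahead of the front, hence with drift $-\sqrt2$, down to a bounded neighbourhood of $0$, collecting rate $1-\tb$ on the remaining interval. Balancing the Cameron--Martin cost of such a trajectory against its Feynman--Kac weight, its contribution is of order $\exp\!\big\{\,t\big[\tg\theta-\tb-\tfrac{(\sqrt2-u_c-\sqrt2\theta)^2}{2\theta}\big]+O(\log t)\big\}$, the $O(\log t)$ absorbing polynomial prefactors, the shift $a$, and the $\tfrac{3}{2\sqrt2}\log t$ term in Bramson's $m(t)$. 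The bracket is concave in $\theta\in(0,1]$: its interior maximiser is $\theta^*=(\sqrt2-u_c)/\sqrt{2(1-\tg)}$, with value $\sqrt2(\sqrt2-u_c)\big(1-\sqrt{1-\tg}\big)-\tb$, while at $\theta=1$ it equals $\tg-\tb-u_c^2/2$; by the definition \eqref{lisa.lan23} of $u_c$ both quantities are $\le0$ and one of them vanishes, so the bracket is $\le0$ for all admissible $\theta$. The extra displacement $z$ and the slope $C_2\log t$ then turn this borderline estimate into genuine decay, and carrying the polynomial prefactors through --- which is exactly where the Laplace-transform bounds of Section 4 are used --- yields $\phi(t,x)<1/t$ for large $t$, hence \eqref{lisa.lan25}.

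\emph{Lower bound, \eqref{lisa.lan24}.} Here I would first bound \eqref{prop.fk} from below by restricting the expectation to paths that actually perform the near-optimal excursion just described inside a thin tube; a matching \emph{lower} bound on the associated Brownian-bridge probability (again from Section 4) shows the linearised quantity is bounded below by a constant at $x=u_ct-C_1'\log t$, provided $C_1'$ is large enough to overcome the polynomial prefactor. To transfer this to $w$, where the nonlinear term cannot be discarded, I would invoke the classical Bramson sub-solution: a suitable multiple of the linearised flow times a smooth spatial cutoff supported near the target point, arranged to stay below $\e$, to dominate the initial data, and to equal a fixed fraction of the linearised solution at $u_ct-C_1\log t$ --- the smallness of the sub-solution letting the $-\tg w^2$ term absorb the error produced by differentiating the cutoff. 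Taking $\e$ small and $C_1>C_1'$ then yields \eqref{lisa.lan24}.

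\emph{Main obstacle.} The technical heart, and the step I expect to be genuinely delicate, is Section 4: obtaining two-sided Laplace-transform estimates for excursions of Brownian bridges against the linearly receding barrier $\gamma$, sharp not merely on the exponential scale but \emph{down to the polynomial prefactor}, since it is those prefactors that determine the admissible sizes of the logarithmic corrections $C_1,C_2$. Everything else --- the comparison $w\le\phi$ and the sub-solution comparison, the reduction of $\o$ to an indicator via \eqref{fkpp.14}--\eqref{fkpp.15}, and the one-variable optimisation in $\theta$ that produces \eqref{lisa.lan23} --- is comparatively routine once those bridge estimates are available.
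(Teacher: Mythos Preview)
Your upper bound argument is essentially the paper's: drop the $-\tg w^2$ term to obtain the linearised Feynman--Kac quantity, replace $\o$ by an indicator with a logarithmic margin $K\sim c\ln t$, and feed the Laplace-transform estimates for the bridge occupation time into the one-parameter optimisation. Your variational computation in $\theta$ reproduces the paper's equation \eqref{heu.7.1} and yields \eqref{lisa.lan25} in the same way as Lemma~\ref{fine.1}.

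The lower bound is where your proposal diverges from the paper, and where there is a genuine gap. Your sub-solution sketch contains a sign error: in the operator $\partial_t\underline w-\tfrac12\partial_{xx}\underline w-k\,\underline w+\tg\,\underline w^2$, the quadratic term enters with a \emph{positive} sign, so it is an obstruction to the sub-solution inequality, not a sink that ``absorbs the error produced by differentiating the cutoff''. If $\underline w=\chi\psi$ with $\psi$ solving the linear equation, the residual is $\psi\,\partial_t\chi-\partial_x\chi\,\partial_x\psi-\tfrac12\psi\,\partial_{xx}\chi+\tg\chi^2\psi^2$, and both the cutoff-derivative terms and the $\tg\chi^2\psi^2$ term have to be controlled from above; nothing here is available to absorb anything. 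Moreover, even if one replaces $k$ by $k-\tg\e$ and truncates at level $\e$ to manufacture a legitimate sub-solution, the resulting front sits at $(u_c-O(\e))t$, not at $u_ct-C_1\ln t$, so with fixed $\e$ you only recover the weaker statement $w(t,(u_c-O(\e))t)>\e$.

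The paper avoids building any PDE sub-solution. It works directly in the Feynman--Kac formula \emph{for $w$ itself}: one assumes by contradiction that $w(t,ut+z)\le\e$ for all $z\ge -2b$, restricts the bridge to the event $\{U_t^b\le L\}$ so that the path never visits the region where $w$ might be large, and then the term $-\tg w$ in the exponent is bounded below by $-\tg\e$ on the entire trajectory (Lemma~\ref{speed.1}). This yields a lower bound of the form $Ct^{-p}$ at $u^*t+z_-$ with $z_-\sim -C\ln t$ (Lemma~\ref{fine.4}), contradicting the hypothesis once $\e=O(1/t)$ and $K\sim c\ln t$. A second, short-time Feynman--Kac step (Lemma~\ref{lem.01}) then bootstraps $w>C/t$ to $w>\delta$ over an additional time $O(\ln t)$, which is what produces the final logarithmic shift $C_1\ln t$ in \eqref{lisa.lan24}. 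Your outline omits this bootstrap entirely; without it you cannot pass from a lower bound that is $O(1/t)$ to one that is bounded away from zero.
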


\begin{remark}
Note that $u_c$ is strictly  larger than $\sqrt{2\left(\tg-\tb\right)}$ for $\tb$ small enough.
{Notice that Venegas-Ortiz et al.  derive in \cite{VAE2014} a rather complicated looking equation, (Eq. 8), and a simpler one (Eq. 9), obtained by expanding in $\tb$. Our results show that the second version is exact, provided $\tb $ is such that 
\be
 \sqrt{2}-\frac{\tb}{\sqrt{2}\tg} \left(1+\sqrt{1-\tg}\right)\geq \sqrt{2\left(\tg-\tb\right)},
 \ee
  while the first seems incorrect. This is 
also in agreement with the finding in \cite{GL2019}.}
An analogous result on an accelerated speed in a slightly different system of equations was derived by purely analytic methods by Holzer and Scheel \cite{HS2014}, Lemma 11.
\end{remark}
\begin{remark}
Note that in fact the result of Theorem \thv(main) does not depend on the choice of $a$ in the initial condition. This is not surprising as a finite shift of the initial condition 
does not affect the large time asymptotic of the solutions.
\end{remark}
 The remainder of this paper is devoted to proving Theorem \thv(main). In the process, we will derive precise bounds on the behaviour of the solutions.

\section{The Feynman-Kac representation}
 Bramson's analysis of the F-KPP equation \cite{B_C} is based on the Feynman-Kac representation. We will do the same for the equation 
 \eqv(fkpp.18). 
 
 \subsection{The representation and elementary bounds}

 \begin{lemma}\TH(fk.1) The solution of \eqv(fkpp.18) satisfies the equation
 \be\Eq(fk.2)
 w(t,x)=
 \E_x\left[\exp\left(\int_0^t 
 \left(1-\tb-(1-\tg)\o\left(B_s+a-\sqrt  2(t-s)\right)-\tg w(t-s, B_s)
\right)ds\right)w(0,B_t)\right],
\ee
where $B$ is a Brownian motion starting in $x$.
\end{lemma}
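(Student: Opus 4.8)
The plan is to view \eqv(fkpp.18) as a \emph{linear} parabolic equation with a bounded, time-dependent potential, obtained by freezing the solution inside the reaction coefficient: setting
\be
\Eq(fk.pot)
k(u,y)\equiv 1-\tb-(1-\tg)\,\o\lb y+a-\sqrt2\,u\rb-\tg\,w(u,y),
\ee
equation \eqv(fkpp.18) reads $\del_t w=\tfrac12\del_{xx}w+k\,w$. First I would record two a priori facts about $w$. It is bounded: since $\o$ takes values in $[0,1]$, the constant $(1-\tb)/\tg$ is a spatially homogeneous supersolution of \eqv(fkpp.18), the constant $0$ is a subsolution, and the initial datum $(1-\tb/\tg)\1_{\cdot\le0}$ lies between them, so the parabolic comparison principle gives $0\le w\le(1-\tb)/\tg$ and hence $\|k\|_\infty\le\kappa<\infty$. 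And $w$ is smooth for positive times: since the initial datum is bounded and measurable, parabolic regularity makes $w$ of class $C^{1,2}$ on $(0,\infty)\times\R$, jointly continuous there, with $w(u,y)\to w(0,y)$ as $u\downarrow0$ at every continuity point $y\neq0$ of the initial datum.

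Next I would fix $t>0$ and $x\in\R$, let $B$ be a Brownian motion started at $x$, and consider, for $s\in[0,t)$,
\be
\Eq(fk.mart)
M_s\equiv w(t-s,B_s)\,\exp\lb\int_0^s k(t-r,B_r)\,dr\rb .
\ee
Applying It\^o's formula to $s\mapsto w(t-s,B_s)$ on intervals $[0,t-\e]$ and substituting \eqv(fkpp.18) in the form $-\del_u w(u,y)+\tfrac12\del_{yy}w(u,y)=-k(u,y)\,w(u,y)$ yields
\be
\Eq(fk.ito)
d\big[w(t-s,B_s)\big]=-k(t-s,B_s)\,w(t-s,B_s)\,ds+\del_y w(t-s,B_s)\,dB_s .
\ee
The factor $\exp(\int_0^s k(t-r,B_r)\,dr)$ has finite variation, with $ds$-derivative equal to itself times $k(t-s,B_s)$, so in the product rule for $M_s$ there is no cross-variation term and the two finite-variation contributions cancel, leaving $dM_s=\exp(\int_0^s k(t-r,B_r)\,dr)\,\del_y w(t-s,B_s)\,dB_s$. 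Hence $M$ is a local martingale on $[0,t)$, and the a priori bounds give $0\le M_s\le\eee^{\kappa t}$, so $M$ is a bounded — therefore genuine — martingale and $\E_x[M_s]=M_0=w(t,x)$ for every $s<t$.

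Finally I would let $s\uparrow t$: bounded convergence gives $\int_0^s k(t-r,B_r)\,dr\to\int_0^t k(t-r,B_r)\,dr$ a.s., and since $B_t\neq0$ a.s.\ the continuity of $w$ up to time $0$ away from the jump of the initial datum gives $w(t-s,B_s)\to w(0,B_t)$; dominated convergence (again via $0\le M_s\le\eee^{\kappa t}$) then turns $\E_x[M_s]=w(t,x)$ into \eqv(fk.2). I expect the only slightly delicate point to be precisely this passage to the terminal time, i.e.\ controlling $w$ near the discontinuity of the Heaviside-type initial condition, which is handled by the uniform bound on $w$ together with the elementary fact that Brownian motion avoids the deterministic level $0$ at the deterministic time $t$. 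The interior step — linearising through $k$ and reading off the cancellation of the drifts in It\^o's formula — is exactly the device Bramson \cite{B_C} uses for the classical F-KPP equation, and is otherwise routine.
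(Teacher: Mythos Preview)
Your argument is correct and is exactly the standard Feynman--Kac derivation that the paper defers to Bramson \cite{B_C} for: freeze the nonlinearity into a bounded potential, apply It\^o's formula to see that $M_s$ is a (bounded, hence true) martingale, and pass to the terminal time. You have in fact written out more than the paper does --- in particular the care you take with the discontinuous Heaviside initial datum (using that $B_t\neq0$ a.s.) is a detail the paper leaves implicit.
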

\begin{proof}
The proof is identical to the one in \cite{B_C}. 
\end{proof}

It is convenient to express the Brownian motion $B$ in terms of its endpoint $B_t$ and  a Brownian bridge 
\be\Eq(fk.3)
\zet_{x,B_t}^t(s)  = x+\frac{s}{t} (B_t-x) + \zet_{0,0}^t,
\ee
from $x$ to $B_t$.
Here $\zet_{0,0}^t$ is a Brownian bridge from $0$ to $0$ in time $t$. 
 Note that the bridge is independent of $B_t$. 
 This leads to the following reformulation of \eqv(fk.2).
 
 \begin{lemma}\TH(fk.4) The solution of \eqv(fkpp.18) satisfies
 \bea\Eq(fk.5)\nonumber
 w(t,x)&=& \frac{1}{\sqrt{2 \pi t}}\int_{-\infty}^\infty dy \eee^{-\frac{(x-y)^2}{2t}} w(0,y)\\\nonumber
&\times& \E\left[\exp\left(\int_0^t 
 \left(1-\tb-(1-\tg)\o\left(\zet^t_{x,y}(s)+a-\sqrt  2(t-s)\right)-\tg w\left(t-s, \zet_{x,y}^t(s)\right)
\right)ds\right)\right]\\\nonumber
&=& \frac{1}{\sqrt{2 \pi t}}\int_{-\infty}^\infty dy \eee^{-\frac{(x-y)^2}{2t}} w(0,y)\\\nonumber
&\times& \E\Biggl[\exp\Biggl(\int_0^t 
 \biggl(1-\tb-(1-\tg)\o\left(x\sfrac {t-s}t +\sfrac st y +\zet^t_{0,0}(s)+a-\sqrt  2(t-s)\right)\\
 &&-\tg w\left(t-s, x\sfrac {t-s}t +\sfrac st y
 +\zet_{0,0}^t(s)\right)
\biggr)ds\Biggr)\Biggr],
\eea
where $\E$ now refers to the expectation with respect to the Brownian bridges $\zet_{x,y}^t$ resp. $\zet_{0,0}^t$.
\end{lemma}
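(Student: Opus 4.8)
The plan is to deduce \eqv(fk.5) from the Feynman--Kac formula \eqv(fk.2) of Lemma~\thv(fk.1) by conditioning the Brownian motion $B$ on its terminal value $B_t$ and then inserting the Brownian bridge decomposition \eqv(fk.3). First I would disintegrate the expectation $\E_x[\,\cdot\,]$ in \eqv(fk.2) over $B_t$: under $\P_x$ the law of $B_t$ has the Gaussian density $(2\pi t)^{-1/2}\eee^{-(x-y)^2/(2t)}$, and the regular conditional distribution of the whole path $(B_s)_{0\le s\le t}$ given $\{B_t=y\}$ is that of the Brownian bridge $\zet^t_{x,y}$ from $x$ to $y$ in time $t$. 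On the event $\{B_t=y\}$ the factor $w(0,B_t)$ equals $w(0,y)$, which can be pulled out of the inner expectation, and the tower property then produces exactly the first equality in \eqv(fk.5), with $\E$ now an expectation over the bridge $\zet^t_{x,y}$.

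For the second equality I would substitute the explicit representation $\zet^t_{x,y}(s)=x\sfrac{t-s}{t}+\sfrac st y+\zet^t_{0,0}(s)$, where $\zet^t_{0,0}$ is a standard Brownian bridge from $0$ to $0$ on $[0,t]$, independent of $B_t$. Since the integrand in the exponent of \eqv(fk.2) depends on the path only through its value at the running time $s$, this substitution is a pointwise rewriting inside the time integral: each occurrence $\o(\zet^t_{x,y}(s)+a-\sqrt2(t-s))$ becomes $\o(x\sfrac{t-s}{t}+\sfrac st y+\zet^t_{0,0}(s)+a-\sqrt2(t-s))$, and similarly for the $w$-term, which yields the last display in \eqv(fk.5). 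Because $\zet^t_{0,0}$ is independent of $B_t$, the inner expectation no longer involves $y$ except through the deterministic affine shift, precisely as written.

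The one point needing a word of justification — the closest thing to an obstacle, though a mild one — is that the $dy$-integral and the expectation may be interchanged (equivalently, that the regular conditional distribution may be used as above). This is immediate here: the initial datum $w(0,\cdot)=(1-\tb/\tg)\1_{\{\,\cdot\,\le0\}}$ and the exponential factor are both nonnegative, so the integrand is nonnegative and Tonelli's theorem applies with no integrability hypothesis; finiteness of the resulting expression is inherited from finiteness of $w(t,x)$ (equivalently, from the fact that $\o$ and the bounded solution $w$ make the exponent in \eqv(fk.2) bounded along every path by a constant multiple of $t$). With these remarks the identity \eqv(fk.5) follows, the argument being identical in structure to the passage from the Feynman--Kac formula to the bridge representation in Bramson~\cite{B_C}.
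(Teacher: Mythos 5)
Your proof is correct and is exactly the argument the paper intends: the paper's own proof of Lemma \thv(fk.4) is simply the word ``Elementary,'' meaning precisely this disintegration of the Feynman--Kac expectation \eqv(fk.2) over the terminal value $B_t$, the identification of the conditional path law as the bridge $\zet^t_{x,y}$, and the substitution of the decomposition \eqv(fk.3). Your additional remark on Tonelli and the boundedness of the exponent is a harmless (and correct) bit of extra care; nothing further is needed.
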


\begin{proof} Elementary.\end{proof}

The fact that $0\leq \o\leq 1$ and $0\leq w\leq 1-\tg/\tb$ yields the first bounds.

\begin{lemma}\TH(lwmma-fk.5) The solution of \eqv(fkpp.18) satisfies
\be\Eq(fk.6)
 w(t,x)\leq  \frac{1}{\sqrt{2 \pi t}}\int_{-\infty}^\infty dy \eee^{-\frac{(x-y)^2}{2t}} w(0,y)
 \eee^{(1-\tb)t}.
 \ee
 and 
\be\Eq(fk.6.1)
 w(t,x)\geq  \frac{1}{\sqrt{2 \pi t}}\int_{-\infty}^\infty dy \eee^{-\frac{(x-y)^2}{2t}} w(0,y).
 \ee
 For Heaviside initial conditions, this implies 
 \be
\Eq(fk.7)
\sqrt {\frac t{2\pi}} \frac {\eee^{-\frac{x^2}{2t}}}{x}\left(1-O (t/ x^2)\right)\leq \frac{w(t,x)}{(1-\tb/\tg)} \leq
\sqrt {\frac t{2\pi}} \frac {\eee^{-\frac{x^2}{2t}+(1-\tb)t}}{x}.
\ee
\end{lemma}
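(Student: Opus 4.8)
The plan is to bound, deterministically along every path, the integrand appearing in the exponent of the Feynman--Kac formula \eqv(fk.2), using only the a priori bounds $0\le\o\le1$ and $0\le w\le1-\tb/\tg$ together with the standing assumption $1>\tg>\tb>0$.

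For \eqv(fk.6) I would observe that $(1-\tg)\o\ge0$ and $\tg w\ge0$, so that the integrand satisfies $1-\tb-(1-\tg)\o(\cdot)-\tg w(\cdot)\le1-\tb$ pointwise; hence the exponent in \eqv(fk.2) is at most $(1-\tb)t$, and bounding the exponential by the constant $\eee^{(1-\tb)t}$ leaves exactly $\eee^{(1-\tb)t}\,\E_x\!\left[w(0,B_t)\right]$, which is the right-hand side of \eqv(fk.6). For \eqv(fk.6.1) I would instead use $\o\le1$ and $w\le1-\tb/\tg$ to get
\be
1-\tb-(1-\tg)\o-\tg w\ \ge\ 1-\tb-(1-\tg)-\tg\left(1-\tfrac{\tb}{\tg}\right)\ =\ 0,
\ee
so that the exponent in \eqv(fk.2) is nonnegative, the exponential is $\ge1$, and what remains is again the heat-kernel convolution $\E_x\!\left[w(0,B_t)\right]$. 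Equivalently, one bounds the Brownian-bridge expectation in \eqv(fk.5) by $\eee^{(1-\tb)t}$, respectively by $1$.

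For the Heaviside case, with $w(0,y)=(1-\tb/\tg)\1_{y\le0}$, the Gaussian integral in \eqv(fk.6)--\eqv(fk.6.1) is explicit: for $x>0$,
\be
\frac{1}{\sqrt{2\pi t}}\int_{-\infty}^{0}\eee^{-\frac{(x-y)^2}{2t}}\,dy\ =\ \P\!\left(x+\sqrt t\,{\cal Z}\le0\right)\ =\ \bar\Phi\!\left(x/\sqrt t\,\right),
\ee
where ${\cal Z}$ is standard normal and $\bar\Phi(u)=\tfrac{1}{\sqrt{2\pi}}\int_u^\infty\eee^{-v^2/2}\,dv$. I would then invoke the elementary Mills-ratio sandwich $\left(\tfrac1u-\tfrac1{u^3}\right)\tfrac{\eee^{-u^2/2}}{\sqrt{2\pi}}\le\bar\Phi(u)\le\tfrac1u\tfrac{\eee^{-u^2/2}}{\sqrt{2\pi}}$ with $u=x/\sqrt t$: the left inequality, combined with \eqv(fk.6.1), gives the lower bound $\sqrt{t/(2\pi)}\,\eee^{-x^2/(2t)}x^{-1}\bigl(1-t/x^2\bigr)$ for $w(t,x)/(1-\tb/\tg)$, and the right inequality, combined with \eqv(fk.6), gives the upper bound $\sqrt{t/(2\pi)}\,\eee^{-x^2/(2t)+(1-\tb)t}x^{-1}$; dividing through by $1-\tb/\tg$ yields \eqv(fk.7) with $1-O(t/x^2)=1-t/x^2$.

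There is no genuine obstacle here: the only place the parameter constraint \eqv(fkpp.3) enters is the nonnegativity of the exponent in the lower bound, and the only non-algebraic ingredient is the classical Gaussian tail estimate. The substantive work of the paper lies in the much sharper bridge-excursion bounds developed in Section~4.
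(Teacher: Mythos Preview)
Your proposal is correct and follows exactly the approach the paper indicates: the pointwise bounds $0\le\o\le1$ and $0\le w\le1-\tb/\tg$ give \eqv(fk.6) and \eqv(fk.6.1) immediately from the Feynman--Kac representation, and the Mills-ratio inequality for the Gaussian tail yields \eqv(fk.7). The paper's own proof is a two-line sketch citing ``the bounds on $\o$ and $w$'' and ``standard Gaussian tail estimates''; you have simply spelled these out.
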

\begin{proof}
Eqs. \eqv(fk.6) and \eqv(fk.6.1) are immediate from the bounds on $\o$ and $w$ mentioned above. \eqv(fk.7) follows from the 
standard Gaussian tail estimates, see, e.g. \cite{LLR}.
\end{proof}
\subsection{First heuristics.} 
Since the term involving $\o$ is explicit, we can improve the upper bound \eqv(fk.6) as follows.
\bea\Eq(fk.8)
w(t,x) &\leq& 
 \sfrac{(1-\tb/\tg)}{\sqrt{2 \pi t}}\int_{-\infty} dy \eee^{-\frac{(x-y)^2}{2t}} \\\nonumber
&\times&
 \E\Biggl[\exp\Biggl(\int_0^t 
 \biggl(1-\tb-(1-\tg)\o\left(x\sfrac {t-s}t +\sfrac st y +\zet^t_{0,0}(s)+a-\sqrt  2(t-s)\right)
\biggr)ds\Biggr)\Biggr].
\eea
Since $w\leq  \o$, we also have the lower bound
\bea\Eq(fk.9)
w(t,x) &\geq& 
 \sfrac{(1-\tb/\tg)}{\sqrt{2 \pi t}}\int_{-\infty}^0 dy \eee^{-\frac{(x-y)^2}{2t}} \\\nonumber
&\times& \E\Biggl[\exp\Biggl(\int_0^t 
 \biggl(1-\tb-\o\left(x\sfrac {t-s}t +\sfrac st y +\zet^t_{0,0}(s)+a-\sqrt  2(t-s)\right)
\biggr)ds\Biggr)\Biggr].
\eea
To see how we can use these bounds, let us first ignore the possible excursions of the Brownian bridge and simply set $\zet_{0,0}^t(s)=0$.
We want to see where $w(t,x)$ drops from $1$ to zero. From \eqv(fk.7) we already know that this must happen before $x=\sqrt {2(1-\tb)} t$. Now assume that for some 
$u\leq \sqrt {2(1-\tb)}$, $w(t,ut +z)\leq \e$, for all $z\geq 0$. 
Then, for $z\geq 0$ independent of $t$, 
\bea\Eq(fk.10)
w(t,ut+z) &\geq& \nonumber
 \sfrac{(1-\tb/\tg)}{\sqrt{2 \pi t}}\int_{-\infty}^0dy \eee^{-\frac{(ut+z-y)^2}{2t}}%\\\nonumber
%&\times&
 \exp\Biggl(\int_0^t 
 \biggl(1-\tb-\o\left((u-\sqrt 2)(t-s) +z \sfrac {t-s}t+\sfrac st y +a\right)\nonumber\\
  &&\qquad\qquad-\tg w\biggl(t-s,
 u(t-s)+z\sfrac {t-s}t+\sfrac st y 
 \biggr)ds\Biggr)\nonumber\\
&\geq& \sfrac{(1-\tb/\tg)}{\sqrt{2 \pi t}}\int_{-\infty}^0dy \eee^{-\frac{(ut+z-y)^2}{2t}} \exp\left(t 
 (\tg-\tb- \e)\right)\nonumber\\
  &\geq&\sfrac{(1-\tb/\tg)}{\sqrt{2 \pi t}u} \eee^{-\frac {u^2t}2 - uz -z^2/(2t)+
  t (\tg-\tb- \e)}\sim  \eee^{-\frac{u^2t}2+  t (\tg-\tb- \e)},
 \eea
 which tends to infinity if $u<\sqrt {2(\tg-\tb)}$. Hence, the hypothesis can only be true for $u\geq \sqrt {2(\tg-\tb)}$.
 On the other hand, if $\sqrt 2 >u>\sqrt {2(\tg-\tb)}$, we get the corresponding 
 upper bound
\be\Eq(fk.11)
w(t,ut+z) \leq\sfrac{(1-\tb/\tg)}{\sqrt{2 \pi t}u} \eee^{-\frac {u^2t}2 - uz -z^2/(2t)+
  t (\tg-\tb)},
  \ee
  which is decaying exponentially with $t$. This  suggests a wave moving at speed 
  $u_0=\sqrt {2(\tg-\tb)}$, which is the speed we obtain if $v(0,x)\equiv 1$. 
  This shows that the only way to move faster is to exploit the possibility of the 
  Brownian bridge to make a forward excursion out of the region where $\o=1$.  
  \subsection{Improved heuristics on the wave speed}
  First, note that in \eqv(fk.5) $y$ is negative, so that we cannot gain anything from it and pretend that it is equal to zero in this subsection. To simplify the heuristics we also set $a=0$. Moreover, as we are analysing the possible gain in $\omega$ by large Brownian bridge excursions to areas where $\omega$ is small, we will ignore $w$ 
  (which is always way smaller than $\omega$) in \eqv(fk.5).
 Hence, we are left with estimating
 \be\Eq(heu.3)
 \frac{1}{\sqrt{2 \pi t}}  \eee^{-\frac{x^2}{2t}} 
\E\Biggl[\exp\Biggl(\int_0^t 
 \biggl(1-\tb-(1-\tg)\o\left(x\sfrac {t-s}t   +\zet^t_{0,0}(s)-\sqrt  2(t-s)\right)
\biggr)ds\Biggr)\Biggr].
 \ee
For our heuristics we approximate $\o$ by 
\be\Eq(heu.7)
\o\left(x\sfrac {t-s}t   +\zet^t_{0,0}(s)-\sqrt  2(t-s)\right)\approx \1_{x\sfrac {t-s}t   +\zet^t_{0,0}(s)-\sqrt  2(t-s)\leq 0}.
\ee
Hence, to further estimate the expectation in \eqv(heu.3) we need an estimate on the time during which the indicator function takes the value $0$. To this end, let
 \be\Eq(heu.8.1)
  T_t\equiv \int_0^t \1_{\zet_{0,0}^t(s)\geq \a(t- s)}ds,
 \ee
 with $\a=\sqrt{2}-x/t$, be the time the Brownian bridge spends above a line with slope $\a$. Note that \eqv(heu.3) is then approximately equal to
 \be\Eq(heu.100)
  \frac{1}{\sqrt{2 \pi t}}  \eee^{-\frac{x^2}{2t}} \eee^{\left(\tg-\tb\right)t}
\E\left[\eee^{(1-\tg)T_t}\right].
 \ee
 Next, on the exponential scale %S used to be (t-S)
 \be
   \P(T_t>S)\approx  \P(T_t\approx S)\approx\P\left(\zet^t_{0,0}(S)\approx
    \left(\sqrt{2} -x/t\right)S\right) 
    =\sqrt{\sfrac{t}{2\pi (t-S)S}} \eee^{-\frac{\left(\sqrt{2} -x/t\right)^2St}{2(t-S)}},
 \ee
 where we used that heuristically the cheapest way to realise the event $\{T_t>S\}$ is to stay above this line up to roughly time $S$.  This probability is roughly dominated by the event to be essentially on the line at time $S$. As we gain a factor $(1-\tg)$ (on the exponential scale) as long as the Brownian bridge is above the line with slope $(\sqrt{2} -x/t)$, to find the dominating event in the expectation in \eqv(heu.3) we need to find the optimal $S^*$, namely
   \be\Eq(heu.1)
  S^*\equiv \mbox{argmax}_S \left(-S \sfrac {t\a^2}{2(t-S)} + (1-\tg)S\right).
  \ee
  By differentiating the right-hand side of \eqv(heu.1), we see that
  \be\Eq(heu.2)
  S^*= t\left(1-\sfrac{\sqrt{2}-x/t}{\sqrt{2(1-\tg)}}\right).
  \ee
 Now, we distinguish two cases. 
 \begin{itemize}
 \item[(Case 1)]If $S^*$ is positive, we 
 plug this back into \eqv(heu.100). Then the exponent in \eqv(heu.100) is to leading order equal to  
  \bea\Eq(heu.4)
&& -\sfrac{x^2}{2t}+ t(1-\tb)-t\sqrt{2(1-\tg)}\left(\sqrt{2}-x/t\right) +\sfrac{\left(\sqrt{2}-x/t\right)^2}{2}t\nonumber\\
 &=& -\tb t+2t(1-\sqrt{1-\tg})- \left(\sqrt{2}-\sqrt{2(1-\tg)}\right) x.
 \eea
 To see where $w$ starts to decay to $0$, we need to see for which $x$ \eqv(heu.4) is equal to zero (hence its exponential is of order $1$). This leads to
 \be\Eq(heu.5)
 x_1^* (\tb)=\sqrt{2}\left(1-\sfrac{\tb}{2\tg}\left(1+\sqrt{1-\tg}\right)\right)t.
\ee
\item[(Case 2)] If $S^*\leq 0$ in \eqv(heu.2), we cannot gain anything from the Brownian bridge excursion into areas where $\o\approx 0$ and always have $\o=1$. And thus the exponent in \eqv(heu.3) is approximately
\be\Eq(heu.6)
-\sfrac{x^2}{2t}+(\tg-\tb),
\ee
which is of order one for
\be\Eq(heu.8)
x_2^*(\tb)=\sqrt{2(\tg-\tb)}t.
\ee
\end{itemize}
%\paragraph{Understanding the case distinction}

We can summarise \eqv(heu.4) and \eqv(heu.6) by
\be\Eq(heu.7.1)
w(t,ut)\approx \exp\left({-t\left(\sfrac{u^2}{2}-\left(\tg-\tb\right)-\sfrac12{\left(\sqrt{2}-u-\sqrt{2\left(1-\tg\right)}\right)^2}\1_{u>\sqrt 2\left(1-\sqrt{1-\tg}\right)}\right)}\right).
\ee
The exponent is zero if $ut=x^*_2(\tb)$ %takes the value in \eqv(heu.8) 
and $u\leq\sqrt 2\left(1-\sqrt{1-\tg}\right)$ or if $ut=x^*_1(\tb)$  %takes the value in \eqv(heu.5) 
and $u>\sqrt 2\left(1-\sqrt{1-\tg}\right)$.
Seeing $ x_1^*(\tb)$ as a function of $\tb$, we observe that it is decreasing in $\tb$ and there is exactly one critical value $\tb^*_1$ such that 
\be\Eq(heu.9)
 x_1^*(\tb^*_1)= \sqrt2\left(1-\sqrt{1-\tg}\right)t.
\ee
Namely,
\be\Eq(heu.10)
\tb_1^*=2\left(\tg+\sqrt{1-\tg}-1\right).
\ee
Similarly, seeing $ x_2^*(\tb)$ as a function if $\tb$ we observe that it is decreasing in $\tb$ and there is exactly one critical value $\tb^*_2$ such that 
\be\Eq(heu.9.1)
 x_2^*(\tb^*_2)= \sqrt 2\left(1-\sqrt{1-\tg}\right)t.
\ee
Namely,
\be\Eq(heu.10.1)
\tb_2^*=2\left(\tg+\sqrt{1-\tg}-1\right)=\tb_1^*.
\ee
As the two critical values for $\tb$ are the same, this suggests that for $\tb>\tb_1^*$ the speed of the wave equals $x_2^*/t$ and increases continuously to $x_1^*$ for $\tb<\tb_1^*$.
This will be made rigorous in the following sections.

\section{Brownian bridge estimates}

 In this section we provide 
  the key input about Brownian bridges that is needed to make the 
 heuristics above rigorous.
 \subsection{Probabilities of excursions}
  As $\o$ is not exactly an indicator function,
  the  key question is to know the distribution of the time a Brownian bridge $\zet_{0,0}^t$ 
  spends well above and well below a line $(\sqrt 2-u)(t-s)$, $0\leq s\leq t$. 
  Define, for $\a\equiv \sqrt 2-u$ fixed, for $K\in\R$, (see Figure \ref{lisa-bridge.2})
  \be
  \Eq(fk.12)
  T^{K}_t\equiv \int_0^t \1_{\zet_{0,0}^t(s)\geq \a(t- s)+ K}ds.
  \ee
  Note that 
  $\zet^t_{0,0}(s)$ has the same law as   $\zet^t_{0,0}(t-s)$, and so we can replace 
  $T_t^{K}$ by
  \be
  \Eq(fk.12.1)
  T^{K}_t= \int_0^t \1_{\zet_{0,0}^t(s)\geq \a s+ K}ds,
  \ee
  for convenience. The following theorem provides precise tail asymptotic for $T^{K}_t$.

%\subsection{The case $K=0$}
%  We begin with the case $K=0$ and write 
 %$T^{\pm 0}_t\equiv T_T$. 
 
 \begin{theorem}\TH(bb.1)
 Let $\zet_{0,0}^t$ be a Brownian bridge from zero to zero in time $t$. Let $\a>0$ and 
 $T_t$ defined in \eqv(fk.12). Then, for $0<s\leq 1$, 
 \bea \Eq(bb.2)
&& \P\left(T^K_t> st\right)
\\
&&= t^{-3/2}\a \sqrt{\sfrac 1{2\pi s^3(1-s)^3}} 
\eee^{-\frac{t\a^2s}{2(1-s)}-\frac{\a K}{1-s}}
\times
\begin{cases} 
\left(\sfrac{2(1-s)^2}{\a^2}\right)^2 {2} \left(1+o(1)\right),&\;\hbox{\rm if }\; K=0,
\nonumber\\
\left(\sfrac{2(1-s)^2}{\a^2}\right)^{3/2}
\sfrac{\sqrt{2}K}{\sqrt {\pi}} \left(1+o(1)\right),&\;\hbox{\rm if }\; K<0,
\end{cases}
\eea
and 
\be\Eq(bb2plus)
\P\left(T^K_t>st\right)
=t^{-3/2}
K(\sqrt \pi-1)
\sqrt{\sfrac { 1-s}{2\pi s^3}} 
\eee^{-\frac{t\a^2s}{2(1-s)}-\frac{\a K(1+\sqrt 2)}{1-s}}
\left(\sfrac{\sqrt 2K\a}{1-s}+1\right) (1+o(1)),
 \ee
if $K>0$.
 \end{theorem}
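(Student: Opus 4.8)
Following the heuristics of Section~3, the plan is to use that $\{T^K_t>st\}$ is, up to polynomial corrections, produced by a single macroscopic excursion of $\zet_{0,0}^t$ above the line $\ell(r):=\a r+K$, of length $\approx st$, which reaches height $\approx \ell(st)=\a st+K$ at its end and is followed by a descent of the bridge to $0$ at time $t$, during which no further time above the (now steeply rising) line is accumulated. The three cases of the theorem correspond to the position of the bridge relative to $\ell$ at time $0$: strictly above ($K<0$), exactly on ($K=0$), strictly below ($K>0$). I would prove lower bounds by restricting to this dominant scenario, matching upper bounds by decomposing $\{T^K_t>st\}$ along the excursion structure of $\zet_{0,0}^t-\ell$, and then control the remaining configurations.

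\paragraph{The case $K<0$.} Here the excursion may begin at $r=0$, so $\{T^K_t>st\}$ coincides, up to polynomial factors, with $\{\zet_{0,0}^t(r)\ge\ell(r)\ \forall r\in[0,st]\}$. I would condition on $z:=\zet_{0,0}^t(st)$: this splits the bridge into a bridge from $0$ to $z$ on $[0,st]$ and a bridge from $z$ to $0$ on $[st,t]$, and for $z$ in the relevant window $z=\a st+K+w$ with $w=O(1)$ the second piece is essentially unconstrained and contributes only the Gaussian density $(2\pi s(1-s)t)^{-1/2}\eee^{-z^2/(2s(1-s)t)}$ of $\zet_{0,0}^t(st)=z$. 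The first piece must stay above $\ell$; subtracting the slope of $\ell$ turns this into the classical Brownian-bridge barrier probability $1-\exp\!\big(\tfrac{2K(z-\a st-K)}{st}\big)\approx\tfrac{2|K|w}{st}$. Using $z^2/(2s(1-s)t)=\tfrac{\a^2 st}{2(1-s)}+\tfrac{\a(K+w)}{1-s}+O(1/t)$ and integrating over $w$ against $\tfrac{2|K|w}{st}\,\eee^{-\a w/(1-s)}$ yields the exponential $\eee^{-t\a^2 s/(2(1-s))-\a K/(1-s)}$, the factor $t^{-3/2}$, and the powers of $s,(1-s),\a,K$ of~\eqv(bb.2); pinning down the numerical constant additionally requires accounting for the $O(1)$ freedom in where the excursion actually ends, which I would do by decomposing on the last $r<st$ with $\zet_{0,0}^t(r)=\ell(r)$.

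\paragraph{The cases $K=0$ and $K>0$.} For $K=0$ the bridge starts on $\ell$, so $\P(\zet_{0,0}^t\ge\ell\text{ on }[0,st])=0$ and the dominant excursion has a genuinely random starting time; integrating over it by Brownian excursion theory contributes an extra local-time factor, which is what promotes $(2(1-s)^2/\a^2)^{3/2}$ in the $K<0$ case to $(2(1-s)^2/\a^2)^2$ in~\eqv(bb.2). For $K>0$ the bridge starts below $\ell$, so before the long excursion it must perform a first passage to the line; I would insert the corresponding first-passage density, of the form $\tfrac{K}{\sqrt{2\pi\t^3}}\eee^{-(K+\a\t)^2/(2\t)}$ (first passage to level $K$ of the bridge after subtracting the linear drift), apply the strong Markov property at the first-passage time $\t$ --- noting that the bridge then starts on the barrier, so the usual meander regularisation is needed --- and combine this with the excursion-above and the pinning at time $t$ via Laplace's method, optimising over $\t$. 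This optimisation, together with the interaction of the first passage with the long excursion, is what produces the shifted exponent $-\a K(1+\sqrt2)/(1-s)$ and the $K$-dependent prefactor of~\eqv(bb2plus).

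\paragraph{Upper bounds and the main obstacle.} For the matching upper bounds I would decompose $\{T^K_t>st\}$ according to the first time the occupation clock $\int_0^{\cdot}\1\{\zet_{0,0}^t(v)\ge\ell(v)\}\,dv$ reaches $st$ and according to the excursions of $\zet_{0,0}^t-\ell$, and use convexity of the action $\tfrac12\int_0^t(\dot\f(r))^2\,dr$ to show that splitting the time above $\ell$ into several excursions, or displacing the dominant excursion away from the start, strictly increases the cost, so that only the single-excursion configuration contributes to the leading term. The main difficulty is the prefactor-level precision --- especially the $1+\sqrt2$ in~\eqv(bb2plus): a naive Lagrangian/skeleton computation correctly gives only the exponential \emph{rate} $\a^2 s/(2(1-s))$ (the $K$-terms are subexponential in $t$), so the $O(1)$ constants demand genuinely sharp estimates for Brownian-bridge occupation and excursion probabilities, careful treatment of the meander singularity at the first passage, and a proof that the nested limits (the excursion-location integral, the first-passage integral, and $t\to\infty$ in Laplace's method) may be interchanged without losing constants. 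This is precisely what the Laplace-method computations of the following subsections are designed to supply.
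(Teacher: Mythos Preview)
Your strategy is a genuine alternative to the paper's, and the heuristics are sound, but the route the paper takes is quite different and considerably shorter. Instead of conditioning on the midpoint $\zet_{0,0}^t(st)$ and building the prefactors from barrier/first-passage/excursion ingredients, the paper conditions on the \emph{last} time $g_t$ the bridge is above the line $\ell$. Given $g_t$, the conditional law of the occupation time $T_t^K$ is exactly that of the time a Brownian bridge from $-K$ to $0$ in time $g_t$ spends above zero, for which an explicit closed formula of Pechtl/Aurzada--Schickentanz is available; and the law of $g_t$ itself is obtained (via time reversal) from an explicit first-passage formula for Brownian bridges due to Beghin--Orsingher. This collapses the whole problem into a single one-dimensional integral $\int_{st}^t \P(g_t\in du)\,\phi_K(u,st)$, to which Laplace's method applies directly; the three cases $K<0$, $K=0$, $K>0$ arise only through the small-$v$ asymptotics of $\phi_K(st+vt,st)$, and in particular the $1+\sqrt2$ in \eqv(bb2plus) drops out of an explicit elementary integral rather than from any optimisation over a first-passage time $\tau$.

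Your path-decomposition approach can in principle reach the same answers, but be aware that the upper-bound argument you sketch (``convexity of the action'') is a large-deviations device: it pins down the exponential rate $\a^2 s/(2(1-s))$ but says nothing about the polynomial prefactors or the $K$-dependent $O(1)$ constants you are after. To match the theorem at the stated precision you would effectively have to re-derive the Pechtl and Beghin--Orsingher formulas from your excursion decomposition, which is substantially more work than quoting them. If you pursue your route, the cleanest fix is to replace the midpoint conditioning by conditioning on the last exit time from $\{\zet\ge\ell\}$, which decouples the ``occupation'' and ``return to $0$'' parts exactly rather than only approximately.
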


\begin{proof}
To start, 
we define $g_t$ as the last time the Brownian bridge $\zet_{0,0}^t$ is above the 
line $\a s+K$,  (see Figure \ref{lisa-bridge.2})
\be\Eq(g.1)
g_t\equiv \sup \left\{u\leq t: \zet_{0,0}^t (u)\geq \a u+K\right\}.
\ee
\begin{figure}\label{lisa-bridge.2}
\includegraphics[width=7cm]{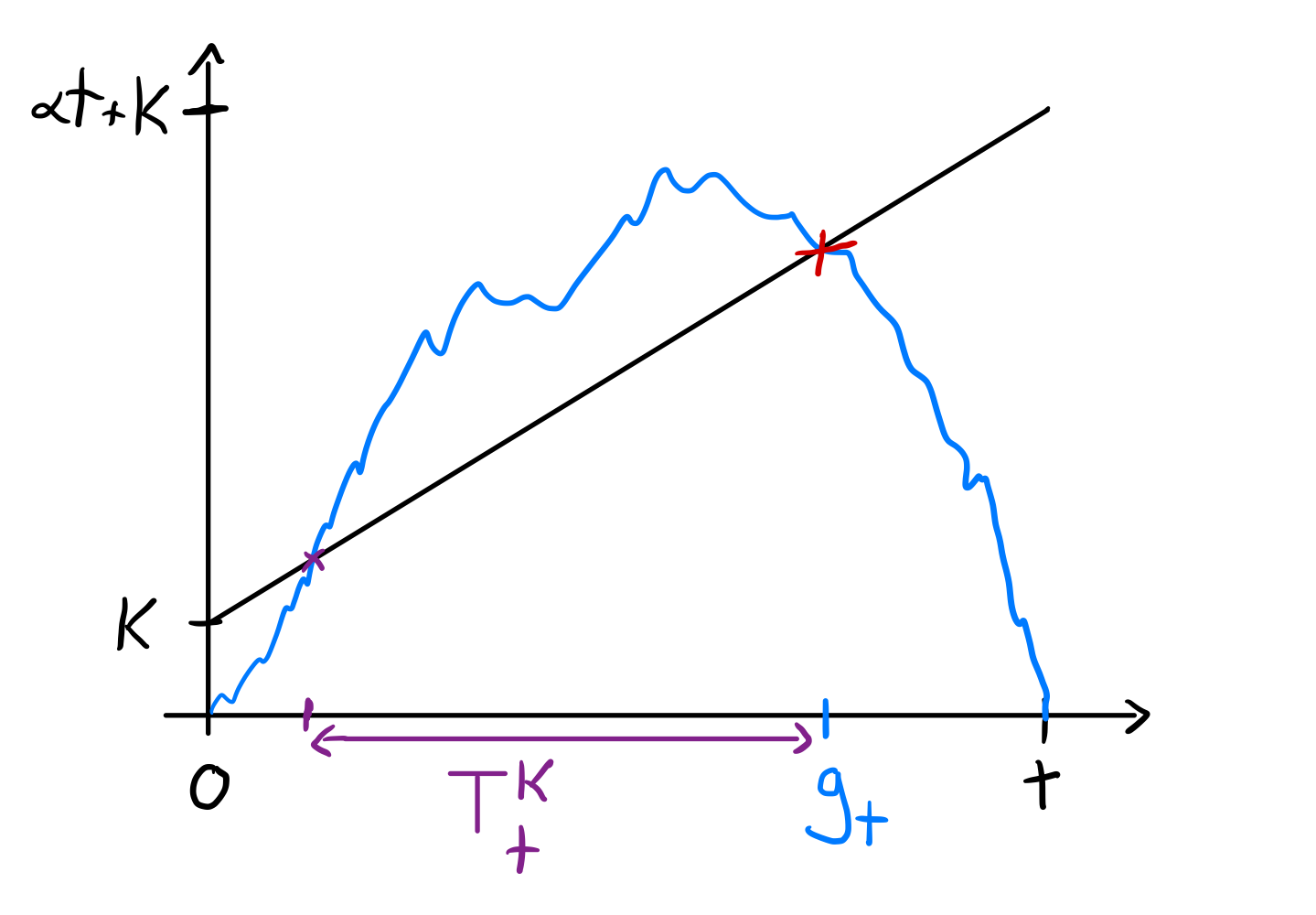}
\caption{Schematic picture of the Brownian bridge spending time $T^K_t$ above the line $\a s+K$.}
\end{figure}
Then 
\bea
\Eq(g.2)
\P\left(T^K_t>S\right)&=&\P\left(\int_0^{g_t} \1_{\zet_{-K,0}^{g_t}(u)\geq0}du\geq S\right)\nonumber\\
&=& \E\left[ \P\left(\int_0^{g_t} \1_{\zet_{-K,0}^{g_t}(u)\geq 0}du\geq S\big | g_t\right)\right]
\eea
 The conditional probability in \eqv(g.2) is known \cite{pech1,pech2}. A more convenient formula
is given in \cite{arzueda}, see Eq. (7) therein. For our setting this yields 
\bea
\Eq(pech.1)
&&\P\left(\int_0^{g_t} \1_{\zet_{-K,0}^{g_t}(u)\geq 0}du\geq S\big| g_t\right)\equiv 
\phi_K(g_t,S)
\nonumber\\
&&=
\begin{cases}-2\left(\sfrac{S}{g_t}\left(1-\sfrac{K^2}{g_t}\right)-1\right)\Phi\left(\sfrac{-K\sqrt{S}}{\sqrt{g_t(g_t-S)}}\right)
-\sfrac{K\sqrt{2S(g_t-S)}}{\sqrt{\pi g_t^3}}\eee^{-\frac {K^2S}{2g_t(g_t-S)}}, &\;\hbox{if}\; K\geq 0,\\
1+2\left(\sfrac{g_t-S}{g_t}\left(1-\sfrac{K^2}{g_t}\right)-1\right)\Phi\left(\sfrac{K\sqrt{g_t-S}}{\sqrt{g_tS}}\right)
-\sfrac{K\sqrt{2S(g_t-S)}}{\sqrt{\pi g_t^3}}\eee^{-\frac {K^2(g_t-S)}{2g_tS}}, &\;\hbox{if}\; K\leq 0,
\end{cases}
\eea
where $\Phi$ is the error function. Note that for $K=0$, this simplifies to 
\be
\Eq(pech.1.1)
%\P\left(\int_0^{g_t} \1_{\zet_{0,0}^{g_t}(u)\geq0}du\geq S\big| g_t\right)
\phi_0(g_t,S)=2\left(1-\sfrac S{g_t}\right)\Phi(0)=\left(1-\sfrac S{g_t}\right),
\ee
which recovers the result that the time spent by a Brownian bridge from $0$ to $0$ in time $g_t$ above $0$ is uniformly distributed on $[0,g_t]$.  

Next we need to control the distribution of $g_t$. Fortunately, this can  be recovered from known results by Beghin and 
Orsingher\cite{BegOrs}.

\begin{lemma}With the notation  above,
\TH(g.3)
\be\Eq(g.4)
\P\left(g_t\geq q\right)
=\eee^{-\frac{2K(\a t+K)}{t}}
\Phi\left(-\sfrac{(\a tq+K(2q-t))}{\sqrt{qt(t-q)}}\right) +1- \Phi\left(\sfrac{(\a tq)+Kt)} {\sqrt{qt(t-q)}}\right).
\ee
\end{lemma}

\begin{proof}
 Looking back in time, we see that we can also interpret $g_t$ as 
 \be g_t=t-\inf\left\{s>0: \zet_{0,0}^t(t-s)=\a (t-s)+K\right\}.
 \ee 
 \begin{figure}\label{lisa-bridge.1}
\includegraphics[width=7cm]{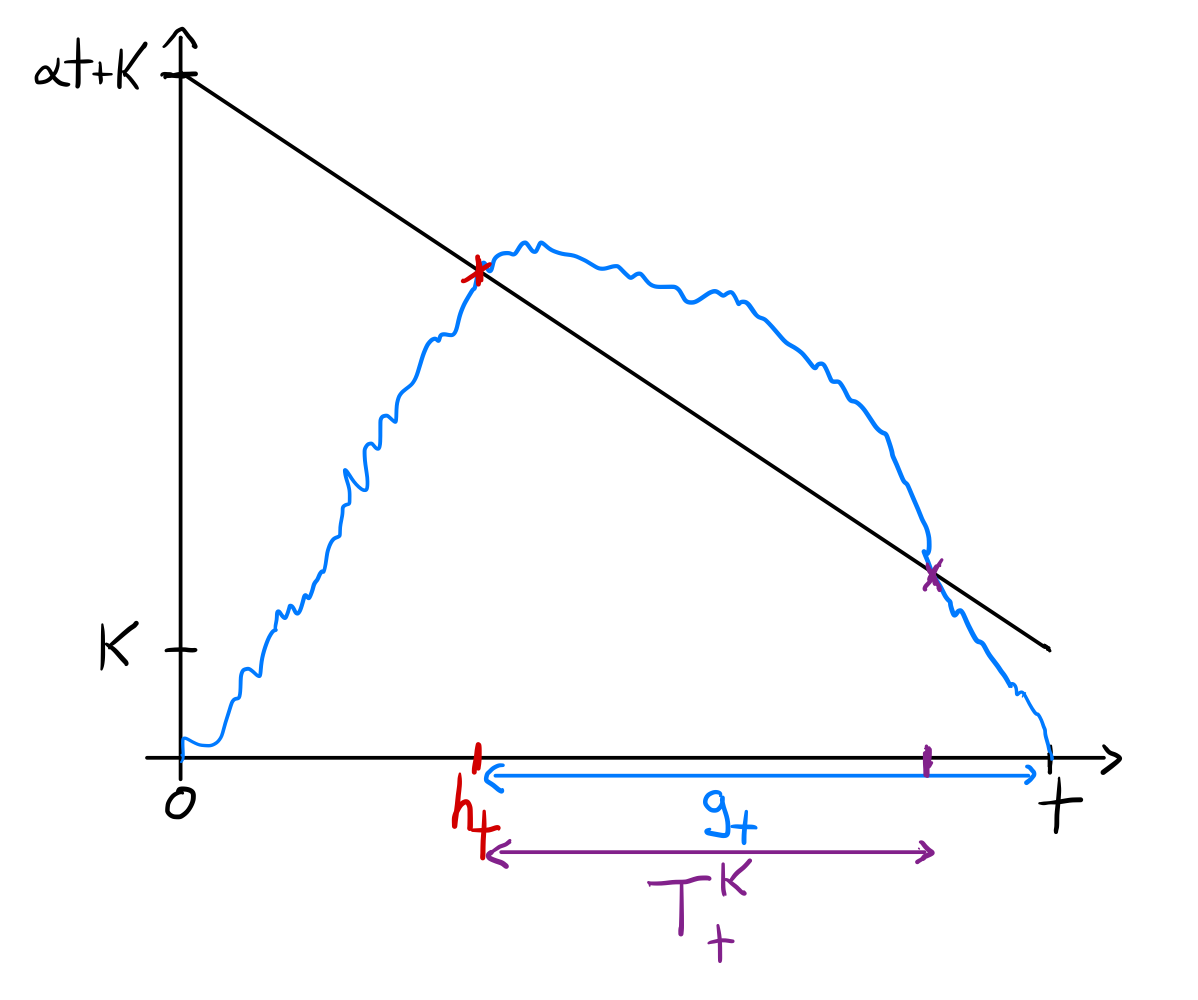}
\caption{Schematic picture of the Brownian bridge in reversed time}
\end{figure}

By time reversal,  this has the same law as $t-h_t$ where 
\bea
h_t&=&\inf\left\{ s>0: \zet_{0,0}^t(s) = \a(t-s)+K \right\}\nonumber\\ 
&=&\inf\left\{ s>0: \zet_{0,\a t}^t(s) = \a t+K \right\}.
\eea
%Now set $r_t(u)\equiv  \inf\left\{ s>0: \zet_{0,0}^t = u \right\}$.
%Then,
%\be\Eq(simple.1)
%\P\left(h_t\leq u\right) \leq \P\left(r_t(\a(t-u)+K) \leq  u\right).
%\ee
The latter probability can be computed using a result by Beghin and Orsingher
\cite{BegOrs} (Lemma 2.1). It yields that, for $\a t+K>0$, 
\be\Eq(bo.1)
\P\left(h_t\leq r\right) = \eee^{-\frac{2K(\a t+K)}{t}}
\Phi\left(-\sfrac{(\a t(t-r)+K(t-2r))}{\sqrt{rt(t-r)}}\right)% \nonumber\\
+1- \Phi\left(\sfrac{(\a t(t-r)+Kt)} {\sqrt{rt(t-r)}}\right).
\ee
If $\a(t-r)+K\leq 0$, then this probability is equal to one.
Note that, in particular, 
\be\Eq(good.1)
\P\left(h_t\leq t\right)=\begin{cases} 
\eee^{-\frac{2K(\a t+K)}t},&\;\hbox{\rm if}\; K>0,\\
1,&\;\hbox{\rm if}\; K\leq0.
\end{cases}
\ee

Note that the term in the second line in \eqv(bo.1) is (asymptotically equal) and 
smaller than 
\be\Eq(funny.1)
\sqrt{\sfrac {r(t-r)}{2\pi(\a(t-r)+K)^2t}}  \eee^{-\frac{\a^2t(t-r)}{2r}-\frac{K\a t}{r}-\frac{K^2t}{2r(t-r)}}.
\ee
If $\a t(t-r)+K(t-2r)>0$, the first term in \eqv(bo.1) is asymptotically equal to and smaller than
\be\Eq(funny.2)
 \eee^{-\frac{2K(\a t+K)}{t}} 
 \sqrt{\sfrac {rt(t-r)}{2\pi(\a t(t-r)+K(t-2r))^2}}
 \eee^{-\frac{\a^2t(t-r)}{2r}-\frac{\a K(t-2r)}{r}-\frac {K^2(t-2r)^2}{2tr(t-r)}}.
 \ee
%If $\a t(t-r)+K(t-2r)<0$, that is if
%$r\geq t\left(1-\sfrac{K}{\a t+2K}\right)$

Recalling that $g_t=t-h_t$,  
we get that 
$\P\left(g_t\geq q\right)=\P\left(h_t\leq t-q\right)$ and hence the assertion of the lemma follows.\end{proof}

We  compute the probability density of the distribution of $g_t$
by differentiating \eqv(g.4). This gives the nice formula
\be\Eq(nice.1)
\P\left(g_t\in du\right)
=(\a t +K)\sqrt{\sfrac t{2\pi u(t-u)^3}} \eee^{-\frac{t(\a u+K)^2}{2u(t-u)}} du.
\ee
Thus, by \eqv(g.2),
\bea\Eq(notsonice.1)
\P\left(T^K_t>st\right)
&=& \int_{st}^t
\P\left(g_t\in du\right)\phi_K(u,st)\\ \nonumber
&=&\int_{st}^t (\a t +K)
\sqrt{\sfrac t{2\pi u(t-u)^3}} \eee^{-\frac{t(\a u+K)^2}{2u(t-u)}}
\phi_K(u,st)du\\\nonumber
&=&\int_{0}^{1-s} (\a  +K/t)
\sqrt{\sfrac 1{2\pi (s+v)(1-s-v)^3}} \eee^{-\frac{t(\a s+v+K)^2}{2(s+v)(1-s-v)}}
\sqrt t\phi_K(st+vt,st) dv,
\eea
where we used \eqv(pech.1) together with \eqv(nice.1). We  use the Laplace method to compute the integral in \eqv(notsonice.1). 
The exponential term takes its maximum  at $v=s$.
Thus we need
to compute the behaviour of the prefactor at $s$.
Let us first consider the more complicated case $K>0$.
We get
\bea\Eq(verynice.1)
\phi_K(st+x,st)
&=&-2\left(\sfrac{st}{st+x}\left(1-\sfrac{K^2}{st+x}\right)-1\right)\Phi\left(-\sfrac{K\sqrt{st}}{\sqrt{(st+x)x}}\right)
-\sfrac{K\sqrt{2st x}}{\sqrt{\pi (st+x)^3}}\eee^{-\frac {K^2st}{2(st+x)x}}\nonumber\\
&\sim&
\eee^{-\sfrac {K^2st}{2(st+x)x}}\sqrt{x(st+x)}\left(\sfrac{2x}{K\sqrt st (st+x)}+\sfrac{K }{st^{3/2}}
-\sfrac{K}{\sqrt{\pi(st+x)^3}}\right)\nonumber\\
&\sim&
\eee^{-\sfrac {K^2}{2x}}\sqrt{x}\sfrac{K }{st}(1-1/\sqrt \pi),
\eea
as $x\downarrow0$. Hence,
\be
t^{1/2}\phi_K(st+vt,st)=K\eee^{-\sfrac {K^2}{2vt}}\sfrac{\sqrt{v} }{s}(1-1/\sqrt \pi)(1+o(1)).
\ee
Similarly,
\be
\sqrt{\sfrac 1{2\pi (s+v)(1-s-v)^3}} \eee^{-\frac{t(\a s+v+K)^2}{2(s+v)(1-s-v)}}
\sim \sqrt{\sfrac 1{2\pi s(1-s)^3}} 
\eee^{-\frac{t\a^2s}{2(1-s)}-\frac{\a K}{1-s}}\eee^{-vt\frac{\a^2}{2(1-s)^2}}.
\ee
Inserting these asymptotics into \eqv(notsonice.1), 
we find that, up to errors of order $1/t$,
\be\Eq(quitenice.1)
\P\left(T^K_t>st\right)
=
\a K(1-1/\sqrt \pi)
\sqrt{\sfrac 1{2\pi s^3(1-s)^3}} 
\eee^{-\frac{t\a^2s}{2(1-s)}-\frac{\a K}{1-s}}
\int_{0}^{1-s} 
\eee^{-vt\frac{\a^2}{2(1-s)^2}- \frac{K^2}{2vt}}\sqrt v
 dv.
\ee
Finally, as $t\uparrow\infty$, substituting $z=vt\frac{\a^2}{2(1-s)^2}$,
\bea
\int_{0}^{1-s} 
\eee^{-vt\frac{\a^2}{2(1-s)^2}- \frac{K^2}{2vt}}\sqrt v
 dv
& \sim&\sfrac {2(1-s)^2}{\a t^{3/2}}\int_0^\infty 
\eee^{-z- \frac{K^2\a^2}{2z(1-s)^2}}\sqrt z
 dz\nonumber\\
 &=& \sfrac {(1-s)^2}{\a t^{3/2}}\sqrt \pi\left(\sfrac{\sqrt 2K\a}{1-s}+1\right)
 \eee^{-\frac{\sqrt 2 K\a}{1-s}}(1+o(1)),
\eea
so that finally 
\be\Eq(quitenice.2)
\P\left(T^K_t>st\right)
=t^{-3/2}
K(\sqrt{\pi}-1)
\sqrt{\sfrac { 1-s}{2\pi s^3}} 
\eee^{-\frac{t\a^2s}{2(1-s)}-\frac{\a K(1+\sqrt 2)}{1-s}}
\left(\sfrac{\sqrt 2K\a}{1-s}+1\right)(1+o(1)).
 \ee
In the remaining cases we get
\be\Eq(asympt.1)\phi_K(ts+tv,ts)
=\begin{cases}
\sfrac {v}{s}, &\;\hbox{\rm if }\; K=0,\\
t^{-1/2}\sfrac{\sqrt{2v}2|K|}{s\sqrt {\pi}}, &\;\hbox{\rm if }\; K<0.
\end{cases}
\ee
Therefore, using Lemma \thv(lem.laplace2),
\bea\Eq(lisa.lan1)
&&\int_{0}^{1-s} (\a  +K/t)
\sqrt{\sfrac 1{2\pi (s+v)(1-s-v)^3}} \eee^{-\frac{t(\a s+v+K)^2}{2(s+v)(1-s-v)}}
\sqrt t\phi_K(st+vt,st) dv
\\
&&=\left(1+o(1)\right)\a \sqrt{\sfrac 1{2\pi s(1-s)^3}} 
\eee^{-\frac{t\a^2s}{2(1-s)}-\frac{\a K}{1-s}}
\int_0^{1-s}dv
\eee^{-vt\frac{\a^2}{2(1-s)^2}}
\times
\begin{cases}
\sfrac {v}{s},&\;\hbox{\rm if }\; K=0 \\
t^{-1/2}\sfrac{\sqrt{2v}2|K|}{s\sqrt {\pi}}, &\;\hbox{\rm if }\; K<0\\
\end{cases}
\nonumber\\
&&= \left(1+o(1)\right)t^{-3/2}\a \sqrt{\sfrac 1{2\pi s^3(1-s)^3}} 
\eee^{-\frac{t\a^2s}{2(1-s)}-\frac{\a K}{1-s}}
\times
\begin{cases} 
\left(\sfrac{2(1-s)^2}{\a^2}\right)^2,&\;\hbox{\rm if }\; K=0,
\nonumber\\
\left(\sfrac{2(1-s)^2}{\a^2}\right)^{3/2}
\sfrac{\sqrt{2}|K|}{\sqrt {\pi}},&\;\hbox{\rm if }\; K<0.
\end{cases}
\eea
\eqv(lisa.lan1) and \eqv(quitenice.2) yield the assertion of Theorem \thv(bb.1).
\end{proof}
The control of the distribution of $T^K_t$ given by Theorem \thv(bb.1)  suffice to prove upper bounds on $w$ and hence upper bounds on the wave speed. To prove lower bounds, it is also necessary to take possible fluctuations of the Brownian bridges in the 
negative direction into account. Therefore, we need
on the distribution of $T^K_t$  a lower bound where 
excursions of the Brownian bridge below zero are suppressed. 
We define, for $b>0$, 
\be\Eq(low.1)
U_t^b\equiv \int_0^t \1_{\zet^t_{0,0}(s)\leq -b}ds.
\ee
We want a lower bound on 
\be\Eq(low.2)
\P\left(\{T^K_t>S\} \cap\{ U_t^b\leq L\}\right).
\ee
The following lemma is not optimal but sufficient for our purposes.
\begin{lemma}\TH(low.3) For $K> 0$, $b>0$, and $L>0$, 
 \be
 \Eq(low.4)
 \P\left(\{T^K_t>st\} \cap \{ U_t^b\leq L\}\right)\geq 
 C t^{-3/2}\eee^{-\sfrac {K^2}{2L}}\sqrt{L}K \sqrt{\sfrac {1-s}{2\pi \a^2}}
\eee^{-\frac{t\a^2s}{2(1-s)}-\frac{\a^2L+\a K}{1-s}}
\left(1-\eee^{-\frac{2b\a s}{1-s}}\right).
 \ee
 \end{lemma}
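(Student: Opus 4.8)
The idea is to derive \eqv(low.4) by restricting the event $\{T_t^K>st\}\cap\{U_t^b\le L\}$ to a convenient sub-event whose probability can be bounded from below by re-using the computations already made for Theorem \thv(bb.1). Let $g_t$ be the last time $\zet^t_{0,0}$ lies above the line $\a s+K$, as in \eqv(g.1)--\eqv(g.4). The key elementary observation is: on $\{g_t\le st+L\}$ the set on which $\zet^t_{0,0}$ exceeds $\a s+K$ is contained in $[0,g_t]$ and has measure $T_t^K$, so the complementary set $\{s\in[0,g_t]:\zet^t_{0,0}(s)<\a s+K\}$ has measure $g_t-T_t^K\le g_t-st\le L$; hence if moreover $\zet^t_{0,0}(s)\ge -b$ for all $s\in[g_t,t]$, then $U_t^b\le L$. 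Since $T_t^K>st$ forces $g_t\ge T_t^K>st$, this gives
\be\Eq(lowplan.1)
\P\big(\{T_t^K>st\}\cap\{U_t^b\le L\}\big)\ \ge\ \P\Big(\{T_t^K>st\}\cap\{g_t\le st+L\}\cap\big\{\zet^t_{0,0}\ge -b\text{ on }[g_t,t]\big\}\Big).
\ee
Conditioning on $g_t=u$ and using that, given $g_t=u$, the bridge on $[0,u]$ and the bridge on $[u,t]$ are independent, with $\{T_t^K>st\}$ measurable with respect to the first, the right-hand side of \eqv(lowplan.1) equals
\be\Eq(lowplan.2)
\int_{st}^{st+L}\P\big(g_t\in du\big)\,\phi_K(u,st)\,\psi_u,\qquad \psi_u:=\P\big(\zet^t_{0,0}\ge -b\text{ on }[u,t]\,\big|\,g_t=u\big),
\ee
with $\P(g_t\in du)$ and $\phi_K$ as in \eqv(nice.1) and \eqv(pech.1).

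For the first two factors I would simply repeat, on the restricted range $u\in[st,st+L]$, the Laplace-type expansions from the proof of Theorem \thv(bb.1): writing $u=st+x$, the exponent in \eqv(nice.1) expands as $\tfrac{t(\a u+K)^2}{2u(t-u)}=\tfrac{t\a^2s}{2(1-s)}+\tfrac{\a^2x+\a K}{1-s}+O(1)$ uniformly for $x\in(0,L]$, its algebraic prefactor is $\sim\a\sqrt{\tfrac1{2\pi s(1-s)^3t}}$, and $\phi_K(st+x,st)\sim \eee^{-K^2/(2x)}\sqrt x\,\tfrac{K}{st}(1-1/\sqrt\pi)$ by \eqv(verynice.1). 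Multiplying and integrating $x$ over $[L/2,L]$ — on which $\eee^{-K^2/(2x)}\sqrt x\ge\tfrac12\eee^{-K^2/L}\sqrt L$ — and bounding the $s$- and $\a$-dependent algebraic factors crudely (this is the step where the bound is ``not optimal''), one gets
\be\Eq(lowplan.3)
\int_{st}^{st+L}\P(g_t\in du)\,\phi_K(u,st)\ \ge\ C\,t^{-3/2}\,\eee^{-K^2/(2L)}\sqrt L\,K\,\sqrt{\tfrac{1-s}{2\pi\a^2}}\,\eee^{-\frac{t\a^2s}{2(1-s)}-\frac{\a^2L+\a K}{1-s}}.
\ee
(If $L$ is not small, shrink the window to $x\in[\tfrac12,1]$; the right-hand side of \eqv(low.4) is eventually decreasing in $L$, so the asserted bound still follows.)

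There remains $\psi_u$, which I expect to be the main obstacle: conditionally on $g_t=u$ the path on $[u,t]$ is a Brownian bridge from $\a u+K$ to $0$ over $[u,t]$ \emph{conditioned to stay below the line $\a s+K$}, a conditioning that is singular at the left endpoint, and one must disentangle it from the event $\{\zet^t_{0,0}\ge -b\}$. My plan is to split $[u,t]$ at a fixed intermediate time $u+\r t$ with $\r>0$ small: on $[u,u+\r t]$ the conditioned path stays below $\a s+K$ and, starting near the level $\a u+K\asymp\a st$, descends to a value of order $\a st$ while remaining positive, all with probability bounded below by a positive constant depending only on $s,\r,\a$; on $[u+\r t,t]$ the line $\a s+K\ge\a(u+\r t)$ lies far above a bridge that runs from a height of order $\a st$ down to $0$, so the ``below the line'' conditioning is asymptotically non-binding there, and $\psi_u$ is bounded below, up to that constant and a factor $1-o(1)$, by the probability that such a bridge avoids the level $-b$. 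By the reflection principle for Brownian bridges ($\P(\text{bridge from }a>0\text{ to }0\text{ over time }T\text{ hits }-b)=\eee^{-2(a+b)b/T}$) with $a\asymp\a st$ and $T=(1-s-\r)t$, this probability converges, uniformly in $u\in[st,st+L]$, to $1-\eee^{-2b\a s/(1-s-\r)}$, which for $\r$ small is at least $(1-o(1))\big(1-\eee^{-2b\a s/(1-s)}\big)$. Inserting this lower bound for $\psi_u$ together with \eqv(lowplan.3) into \eqv(lowplan.2) yields \eqv(low.4). The delicate point to make rigorous is precisely this control of the bridge conditioned to stay below $\a s+K$ — that near time $t$ the conditioning costs essentially nothing, and near time $u$ it keeps the path harmlessly far above $-b$ — whereas the estimates in \eqv(lowplan.3) are routine given the computations already carried out for Theorem \thv(bb.1).
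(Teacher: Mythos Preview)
Your decomposition \eqv(lowplan.1)--\eqv(lowplan.2) is correct, and \eqv(lowplan.3) does follow from the computations already done for Theorem~\thv(bb.1). But your route diverges from the paper's precisely at the step you flag as delicate, and the paper sidesteps that difficulty entirely rather than confronting it.

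Two differences. First, the paper restricts to the \emph{complementary} range $g_t\ge st+L$ (not $g_t\le st+L$) and uses $\phi_K(u,u-L)$ in place of $\phi_K(u,st)$: on $\{g_t=u\}\cap\{T_t^K>u-L\}$ the time in $[0,g_t]$ spent below the line is at most $L$, hence so is the time below $-b$ there; and $u\ge st+L$ then forces $T_t^K>st$. Second --- and this is the point --- the paper never forms the conditional probability $\psi_u$. Instead it computes the \emph{joint} quantity $\P\big(\{g_t\ge q\}\cap\{\zet^t_{0,0}\ge -b\text{ on }[g_t,t]\}\big)$ directly: by time reversal $g_t=t-h_t$ with $h_t$ the first hitting time of the line, so the event becomes $\{h_t\le r\}\cap\{\zet^t_{0,0}\ge -b\text{ on }[0,h_t]\}$, which equals $\{\max_{s\le r}\zet^t_{0,0}(s)\ge \a(t-s)+K\}\cap\{\min_{s\le r}\zet^t_{0,0}(s)\ge -b\}$. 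Restricting further to $\{\zet^t_{0,0}(r)\ge \a(t-r)+K\}$ makes the max condition automatic, and a single application of the reflection principle for the minimum yields an explicit Gaussian integral (their $G^>$), from which the factor $\big(1-\eee^{-2b\a s/(1-s)}\big)$ drops out directly. Pulling out $\min_u\phi_K(u,u-L)\sim \eee^{-K^2/(2L)}\sqrt L\,K/t$ and multiplying by $\sqrt{2\pi t}\,G^>(st+L)$ gives \eqv(low.4).

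What each approach buys: yours recycles more of Theorem~\thv(bb.1) but leaves you with a bridge on $[u,t]$ conditioned to stay below a line it starts on --- a meander-type object whose two-scale analysis you sketch but do not carry out (and which would need, at minimum, a careful Markov splitting at $u+\rho t$ together with an absolute-continuity comparison near the singular left endpoint). The paper's approach requires a fresh reflection-principle calculation but is fully explicit and never conditions on $g_t$, so no singular law ever appears.
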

 \begin{proof}
  Given $g_t$, we  use that 
 \be
 \Eq(low.5)
 \left\{ U_t^b\leq L\right\}\supseteq \left\{ \zet^t_{0,0}(s) \geq -b, \forall g_t\leq s\leq t\right\}
 \cap \left\{ \int_0^{g_t} \1_{\zet_{0,0}^t(s)\leq -b} ds\leq L\right\}.
 \ee 
 The second event in turn contains the event that $\{S>g_t-L\}$.

Hence, the main effort is to control the law of $g_t$ under the restriction
that the bridge remains above $-b$. By the same reasoning as before,
this amounts to proving a lower bound on 
\be
\Eq(low.6)
\P\left(\{h_t\leq t-u \}\cap \{ \zet_{0,0}^t(s)\geq -b,\, \forall s\leq h_t\}\right).
\ee
To bound this, we have to revisit and alter the proof in \cite{BegOrs}.
First, we note that 
\bea\Eq(low.7)
&&\P\left(\{h_t\leq r\} \cap \{ \zet_{0,0}^t(s)\geq -b,\, \forall s\leq h_t\}\right)\nonumber\\
&&=\P\left(\left\{\max_{0\leq s\leq r} \zet_{0,0}^t(s)\geq \a(t-s) +K\right\} \cap
\left\{\min_{0\leq s\leq r}\zet_{0,0}^t(s)\geq -b\right\}\right).
\eea
The latter probability can be written up to normalisation as
\be
\Eq(low.8)
\P\left(\left\{\max_{0\leq s\leq r} B(s)\geq \a(t-s) +K\right\} \cap
\left\{\min_{0\leq s\leq r}B(s)\geq -b\right\}\cap \left\{B(t)=0\right\}\right),
\ee
where $B$ is a Brownian motion started in zero.
 Decomposing  this over the values of $B(r)$ gives
 \bea
\Eq(low.9)\nonumber
&&\P\left(\left\{\max_{0\leq s\leq r} B(s)\geq \a(t-s) +K\right\} \cap
\left\{\min_{0\leq s\leq r}B(s)\geq -b\right\}\cap \left\{B(t)=0\right\}\right)\\ \nonumber
&&=\int_{-b}^\infty 
\P\left(\left\{\max_{0\leq s\leq r} B(s)\geq \a(t-s) +K\right\} \cap
\left\{\min_{0\leq s\leq r}B(s)\geq -b\right\}\cap \left\{  B(r)\in dz\right\}\right)\\\nonumber
&&\qquad\times\P\left(B(t)=0\big | B(r)=z\right)\\\nonumber.
%&&=\int_{-b}^{\a(t-r)+K}
%\P\left(\left\{\max_{0\leq s\leq r} B(s)\geq \a(t-s) +K\right\} \cap
%\left\{\min_{0\leq s\leq r}B(s)\geq -b\cap B(r)\in dz\right\}\right)\\\nonumber
%&&\qquad\times\P\left(B(t)=0\big | B(r)=z\right)\\\nonumber
&&\geq\int_{\a(t-r)+K}^\infty 
\P\left(\left\{\max_{0\leq s\leq r} B(s)\geq \a(t-s) +K\right\} \cap
\left\{\min_{0\leq s\leq r}B(s)\geq -b\right\}\cap\left\{ B(r)\in dz\right\}\right)\\\nonumber
&&\qquad\times\P\left(B(t)=0\big | B(r)=z\right)\\
&&\equiv G^>(t-r).
\eea
Now, if  $z>\a(t-r)+K$ then $B(r)$ is above the line $\a(t-s)$ at $s=r$ and a fortiori
$\max_{0\leq s\leq r} B(s)\geq \a(t-s) +K$. Hence for these values of $z$,
\bea
\Eq(low.10)
&&\P\left(\left\{\max_{0\leq s\leq r} B(s)\geq \a(t-s) +K\right\} \cap
\left\{\min_{0\leq s\leq r}B(s)\geq -b\right\}\cap\left\{ B(r)\in dz\right\}\right)
 \nonumber\\&&=\P\left(\left\{
\min_{0\leq s\leq r}B(s)\geq -b\right\}\cap\left\{ B(r)\in dz\right\}\right)\nonumber\\
&&=\P\left(B(r)\in dz\right)-\P\left(\left\{
\min_{0\leq s\leq r}B(s)\leq -b\right\}\cap \left\{B(r)\in dz\right\}\right).
\eea
For the last probability we have by the reflection principle that
\be
\Eq(low.11)
\P\left(\left\{
\min_{0\leq s\leq r}B(s)\leq -b\right\}\cap\left\{B(r)\in dz\right\}\right)
=\P\left(B(r)\in d(-z-2b)\right).
\ee
The probability in \eqv(low.10) is thus given by
\be\Eq(low.12)
\frac 1{\sqrt{2\pi r}} \eee^{-\frac{ z^2}{2r}} \left(1-\eee^{-\frac{2bz+2b^2}r}\right) dz.
\ee
Hence, 
\bea
\Eq(low.13)
G^>(t-r)&=&
\frac 1{2\pi\sqrt{ r(t-r)}} 
\int_{\a(t-r)+K}^\infty \eee^{-\frac{ z^2}{2r}} \left(1-\eee^{-\frac{2bz+2b^2}r}\right) 
\eee^{-\frac{z^2}{2(t-r)}} dz\nonumber\\
&&=\frac 1{2\pi\sqrt{ r(t-r)}} 
\int_{\a(t-r)+K}^\infty \eee^{-\frac{ z^2t}{2r(t-r)}}  \left(1-\eee^{-\frac{2bz+2b^2}r}\right) dz.
%&&\geq \frac 1{2\pi} \frac{\sqrt{r(t-r)}}{t(\a (t-r)+K)} 
% \eee^{-\frac{ t(\a (t-r)+K)^2}{2r(t-r)}}  \left(1-\eee^{-\frac{(2b(\a (t-r)+K)-2b^2)}r}\right) .
\eea
Passing back to the Brownian bridge, this yields 
\be
\Eq(low.14)
\P\left(\{g_t\geq u\} \cap\{ \zet_{0,0}^t(s)\geq -b,\, \forall s\leq h_t\}\right)
\geq \sqrt{2\pi t}G^>(u)
 %= \sqrt{\frac{ u(t-u)}{2\pi t(\a u+K)^2} }
 %\eee^{-\frac{ t(\a u+K)^2}{2u(t-u)}}  \left(1-\eee^{-\frac{(2b(\a u+K)-2b^2)}{t-u}}\right) ,
\ee
%which is, up to the last factor in brackets the same as \eqv(funny.1).
Since $\phi_K(g_t, S)$ is monotone increasing in $g_t$, it holds that 
\bea
\Eq(low.15)
\P\left(\{T^K_t>S\} \cap\{ U_t^b\leq L\}\right)
&\geq& \int_{S+L}^t \P\left(\{g_t\in du\} \cap\left\{ \zet_{0,0}^t(s)\geq -b,\forall u\leq s\leq t\right\}\right)
\phi_K(u,u-L)\nonumber\\
&\geq& \min_{u\in [S+L,t]} \phi_K(u,u-L)
\sqrt{2\pi t} G^>(S+L).
\eea
Using \eqv(verynice.1), for $L$ finite and $S=st$,  
\bea
\Eq(low.16)
&&\P\left(\{T^K_t>st\} \cap  \{U_t^b\leq L\}\right)\nonumber\\
&&\sim\eee^{-\sfrac {K^2}{2L}}\sqrt{L}\sfrac{K }{t}(1-1/\sqrt \pi)
\sqrt{\sfrac { t}{{2\pi (st+L) (t-st-L)}} }
\int_{\a(st+L)+K}^\infty \eee^{-\frac{ z^2t}{2(st+L)(t-st-L)}}  \left(1-\eee^{-\frac{2bz+2b^2}{t-st-L}}\right) dz
\nonumber\\
&&
\geq \eee^{-\sfrac {K^2}{2L}}\sqrt{L}\sfrac{K }{t}(1-1/\sqrt \pi)
\sqrt{\sfrac{{(st+L)(t-st-L)}} {{2\pi (\a(st+L)+K)^2 t}}}
%
%\sfrac {\sqrt{s(1-s)}}{\sqrt{2\pi \a s}}
\eee^{-\frac{ t(\a( s+L/t)+K/t)^2}{2(s+L/t)(1-s-L/t)}}
\left(1-\eee^{-\frac{2b\a s}{1-s}}\right)
\nonumber\\
&&
\geq \eee^{-\sfrac {K^2}{2L}}\sqrt{L}\sfrac{K }{t}(1-1/\sqrt \pi)
\sqrt{\sfrac{(1-s)}{2\pi \a^2 st }}
\eee^{-\frac{t\a^2s}{2(1-s)}-\frac{\a^2L+\a K}{1-s}}
\left(1-\eee^{-\frac{2b\a s}{1-s}}\right)
\left(1+O(1/t)\right)
\nonumber\\
&&
\geq C t^{-3/2}\eee^{-\sfrac {K^2}{2L}}\sqrt{L}K \sqrt{\sfrac {1-s}{2\pi \a^2}}
\eee^{-\frac{t\a^2s}{2(1-s)}-\frac{\a^2L+\a K}{1-s}}
\left(1-\eee^{-\frac{2b\a s}{1-s}}\right),
\eea
for some $C>0$.
 \end{proof}
 \begin{remark}
 Note that, up to constants, the difference between the expression for $P(T^K_t>st)$ is 
that a factor $1/s$ is missing; this is due to the lower bound in \eqv(low.15).
 To keep the difference in upper and lower bound of polynomial order in $t$ one could choose
 $L\sim K$ and 
 \be\label{L.1}
 |K|\leq C\log(t).
 \ee
 \end{remark}
\subsection {The Laplace transforms}

As seen in \eqv(heu.100), we need to control the Laplace transform of $T^K_t$.  The behaviour of the Laplace transform is very different weither $2\l>\a^2$ or $2\l\leq \a^2$. 
\begin{lemma}\TH(bb.30)
Assume that $2\l>\a^2$. Then, as  $t\uparrow \infty$, 
\bea
\Eq(bb.31)
\E\left[\eee^{\l  T^K_t}\right] 
=
\eee^{t\frac{(\a-\sqrt{2\l})^2}2-K\sqrt{2\l}}
\sfrac {\sqrt{2\a}}{\sqrt{\pi (\sqrt{2\l}-\a)^3}}\times
\begin{cases} 
  \left(1+o(1)\right),&\;\hbox{\rm if }\; K=0,
\nonumber\\
{\sqrt{2\l}|K|} \left(1+o(1)\right),&\;\hbox{\rm if }\; K<0,
\end{cases}
\eea
and, if $K>0$, 
\be
\E\left[\eee^{\l  T^K_t}\right] 
=
\eee^{t\frac{(\a-\sqrt{2\l})^2}2-K\sqrt{2\l}(1+\sqrt 2)}
K(\sqrt{\pi}-1)\sqrt{\sfrac{\a\l}{4\pi(\sqrt{2\l}-\a)^3}}\left(2K\sqrt\l+1\right)(1+o(1)).
\ee
\end{lemma}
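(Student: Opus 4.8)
The plan is to rewrite $\E[\eee^{\l T^K_t}]$ as a tail integral, substitute the sharp asymptotics for $\P(T^K_t>st)$ supplied by Theorem \thv(bb.1), and evaluate the resulting one–dimensional integral by Laplace's method. The hypothesis $2\l>\a^2$ is exactly what forces the relevant saddle point into the open interval $(0,1)$, so that one is in a genuinely exponential regime; when $2\l\le\a^2$ the maximiser of the rate function sits at $s=0$, where it equals $0$, and $\E[\eee^{\l T^K_t}]$ grows only polynomially --- a separate, easier regime.

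First I would use Fubini's theorem: since $0\le T^K_t\le t$ almost surely, $\E[\eee^{\l T^K_t}]=\P(T^K_t>0)+\l\int_0^t\eee^{\l v}\P(T^K_t>v)\,dv$, which after the substitution $v=st$ reads
\be
\E\left[\eee^{\l T^K_t}\right]=\P\left(T^K_t>0\right)+\l t\int_0^1\eee^{\l st}\,\P\left(T^K_t>st\right)\,ds ,
\ee
the boundary term being at most $1$ and hence negligible against an exponentially growing answer. Inserting the appropriate branch of \eqv(bb.2)--\eqv(bb2plus), the integrand's exponential part is $\eee^{t f(s)}$ with $f(s)\equiv\l s-\a^2 s/\big(2(1-s)\big)$, multiplied by the bounded factor $\eee^{-\a K/(1-s)}$ (for $K\le0$; $\eee^{-\a K(1+\sqrt2)/(1-s)}$ for $K>0$) and by an explicit algebraic prefactor continuous on $(0,1)$. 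I would then locate the saddle: $f'(s)=\l-\a^2/\big(2(1-s)^2\big)$ vanishes only at $s^*=1-\a/\sqrt{2\l}$, which lies in $(0,1)$ exactly when $2\l>\a^2$; since $f''=-\a^2(1-s)^{-3}<0$, this is the global maximum of $f$ on $[0,1)$. A short computation gives $f(s^*)=\tfrac12\big(\a-\sqrt{2\l}\big)^2$ and $|f''(s^*)|=(2\l)^{3/2}/\a$, and from $1-s^*=\a/\sqrt{2\l}$ one gets $\eee^{-\a K/(1-s^*)}=\eee^{-K\sqrt{2\l}}$, $\eee^{-\a K(1+\sqrt2)/(1-s^*)}=\eee^{-K\sqrt{2\l}(1+\sqrt2)}$, and $\sqrt2 K\a/(1-s^*)+1=2K\sqrt\l+1$ --- precisely the exponents and bracketed factor in the statement.

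Next I would run Laplace's method around $s^*$. The part of the $s$-integral with $s$ bounded away from $s^*$ is $o\!\big(\eee^{tf(s^*)}\big)$: for $s$ near $0$ one has $f<f(s^*)$ and the crude bound $\P(T^K_t>st)\le1$ suffices (choosing the cutoff $\d$ so small that $\l\d<f(s^*)$), while for $s$ near $1$ one uses $\P(T^K_t>st)\le\P(T^K_t>(1-\e)t)$ together with Theorem \thv(bb.1), which is super-polynomially small for $\e$ small. On a vanishing window $|s-s^*|\le t^{-1/4}$ I would Taylor-expand $f$ to second order, freeze the algebraic prefactor and the factor $\eee^{-\a K/(1-s)}$ at their values at $s^*$, and carry out the Gaussian integral $\int\eee^{-\frac t2|f''(s^*)|(s-s^*)^2}ds=\sqrt{2\pi/\big(t|f''(s^*)|\big)}$. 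Collecting this against the $\l t$ from the first step and the prefactor of \eqv(bb.2)--\eqv(bb2plus) at $s^*$, and simplifying everything with $1-s^*=\a/\sqrt{2\l}$ and $|f''(s^*)|=(2\l)^{3/2}/\a$, yields the claimed asymptotics in each of the cases $K=0$, $K<0$, $K>0$, which differ only by which line of Theorem \thv(bb.1) is fed in.

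The hard part will be justifying Laplace's method rigorously, that is, upgrading the pointwise $(1+o(1))$ in Theorem \thv(bb.1) to uniformity for $s$ in compact subintervals of $(0,1)$, in particular on the vanishing window about $s^*$. This is not stated there but follows by inspecting the proof: the endpoint Laplace evaluation of the $v$-integral in \eqv(notsonice.1) and the expansions \eqv(verynice.1) and \eqv(asympt.1) are all uniform in $s$ on compacts of $(0,1)$. Everything else --- the tail estimates near $s=0,1$ and the final collection of constants --- is routine.
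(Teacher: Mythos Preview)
Your proposal is correct and follows essentially the same route as the paper: rewrite the Laplace transform as a tail integral $1+\l\int_0^t\eee^{\l r}\P(T^K_t>r)\,dr$ (your boundary term should be $1$ rather than $\P(T^K_t>0)$, but this is immaterial), substitute the asymptotics of Theorem \thv(bb.1), and apply Laplace's method with the rate function $f(s)=\l s-\a^2 s/(2(1-s))$ at the interior saddle $s^*=1-\a/\sqrt{2\l}$. The paper packages the Laplace step into the appendix Lemma \thv(lem.laplace), whereas you spell out the window/tail decomposition explicitly and correctly flag the need for uniformity of the $(1+o(1))$ in Theorem \thv(bb.1) on compacts of $(0,1)$, a point the paper leaves implicit.
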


\begin{proof} Note first that for any non-negative random variable $T$, 
\be
\Eq(bb.32)
\E\left[\eee^{\l T}\right] =\int_0^\infty \P\left(T\geq \sfrac {\ln y}{\l}\right)dy
=\int_{-\infty}^\infty  \l \eee^{\l r}\P\left(T\geq r\right)dr.
\ee
From Theorem \thv(bb.1) we see that, for $0<r<t$, 
\be
\P\left(T^K_t\geq r\right) =t^{-3/2}P_K(t,r/t) \eee^{-\frac{\a^2 tr}{2(t-r)}},
\ee
where 
$ P_K(t,s)$  is polynomially bounded. Moreover, 
$\P\left(T\geq r\right) =1$ for  all $r\leq 0$ and $\P\left(T\geq t\right)=0$.Thus 
\bea
\Eq(bb.34)
\E\left[\eee^{\l T_t}\right] &=& \int_{-\infty}^0 \l \eee^{\l r}dr +t^{-3/2} \int_{0}^t \l \eee^{\l r} 
 \eee^{-\frac{\a^2t r}{2(t-r)}} P_K(t,r/t)(1+o(1))dr
 \nonumber\\
 &=&1+  t^{-1/2}\int_{0}^1 \l \eee^{\l st} 
 \eee^{-t\frac{\a^2 s}{2(1-s)}} P_K(t,s) (1+o(1))ds
 \eea
Let
 \be
 \Eq(bb.35)
 f(s)=\l s-\sfrac{\a^2s}{2(1-s)}, \quad s\in (0,1).
 \ee
$f(s)$ takes its maximum  in $(0,1)$   
% \be
% \l=\frac{\a^2}{2(1-s)} +\frac{\a^2s}{2(1-s)^2}=\frac{\a^2 }{2(1-s)^2},
% \ee
% which yields 
at
 \be\Eq(bb.36)
s^*=1-\sfrac{\a}{\sqrt{2\l}},
 \ee
 provided that   $2\l>\a^2$. 
 By an elementary computation, 
 \be
 f(s^*)=\sfrac 12\left(\a-\sqrt{2\l}\right)^2,
 \ee
 and
  the second derivative of $f$ at $s^*$ is given by 
 \be
 \Eq(bb.37)
 f''(s^*) =-\sfrac{2\l\sqrt{2\l}}{\a}<0.
 \ee
 Using Lemma \thv(lem.laplace) with $f(s)$ as in \eqv(bb.35)
 we get that, if $2\l>\a^2$,  
\be \Eq(bb.38)
\E\left[\eee^{\l T_t^K}\right] =1+\l \sfrac{\sqrt{\a}}{(2\l)^{3/2}} \eee^{\frac {t\left(\a-\sqrt {2\l}\right)^2}{2}}
P_K(t,s^*).
\ee
where
\be
P_K(t,s^*)=\sfrac {2\l}{\sqrt{2\pi \left(1-\a/\sqrt{2\l}\right)^3}} \eee^{
-{ K\sqrt{2\l}}}
\times
\begin{cases} 
\left(\sfrac 1{\l}\right)^2 {2} \left(1+o(1)\right),&\;\hbox{\rm if }\; K=0,
\\
\left(\sfrac1{\l}\right)^{3/2}
\sqrt{2}|K| \left(1+o(1)\right),&\;\hbox{\rm if }\; K<0,
\end{cases}
\ee
and 
\be
P_K(t,s^*)=
K\left(\sqrt \pi-1\right)
\sfrac {2\l}{\sqrt{2\pi\left(\sqrt{2\l}-\a\right)^3}} 
\eee^{-\sqrt{2\l} K(1+\sqrt 2)}
\left({\sqrt 2K\sqrt{2\l}}+1\right) (1+o(1)),
\ee
if $K>0$.
The claim of Lemma \thv(bb.30) follows.
\end{proof}
For the lower bound on $w$, we need to take negative excursions into account, as already mentioned in Section 4.1. The following Lemma provides a corresponding bound on the Laplace transform.
\begin{lemma}\TH(lem.lap.low)
Assume that $2\lambda>\alpha^2$ and let $K>0$. Then, as $t\uparrow\infty$, 
\be\Eq(L.2)
\E\left[\eee^{\l T_t^K}\1_{U_t^b\leq L}\right]
\geq 
C \eee^{-\sfrac {K^2}{2L}}\sqrt{L}K \sqrt{\sfrac {1}{\left(\sqrt{2\lambda}-\a\right)2\pi \a}}\eee^{t\frac{(\a-\sqrt{2\l})^2}2-K\sqrt{2\l}(1+\sqrt 2)}\left(1-\eee^{-2b\left(\sqrt{2\l}-\a\right)}\right).
\ee
\end{lemma}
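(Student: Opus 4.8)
This is the lower-bound counterpart of Lemma \thv(bb.30), now for the exponential moment restricted to the event $\{U^b_t\le L\}$, and the plan is to prove it along exactly the same lines, replacing the sharp tail asymptotics of Theorem \thv(bb.1) by the restricted lower bound \eqv(low.4) of Lemma \thv(low.3). First I would use the layer-cake identity: for a non-negative random variable $T$ and an event $A$,
\[
\E\left[\eee^{\l T}\1_A\right]=\int_{-\infty}^{\infty}\l\eee^{\l r}\,\P\left(\{T\ge r\}\cap A\right)dr\ \ge\ \int_0^t\l\eee^{\l r}\,\P\left(\{T^K_t\ge r\}\cap A\right)dr,
\]
applied with $T=T^K_t$ and $A=\{U^b_t\le L\}$, the inequality simply discarding the non-negative contribution of $r\le0$. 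The substitution $r=st$ turns the right-hand side into $t\int_0^1\l\eee^{\l st}\,\P(\{T^K_t\ge st\}\cap\{U^b_t\le L\})\,ds$, into which I would insert \eqv(low.4).

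After pulling out the $t$-dependent exponential, the bound takes, for a suitable constant $C>0$, the form
\[
\E\left[\eee^{\l T^K_t}\1_{U^b_t\le L}\right]\ \ge\ C\,t^{-1/2}\,\l\,\eee^{-K^2/(2L)}\sqrt L\,K\int_0^1 h(s)\,\eee^{t f(s)}\,ds,
\]
where $f(s)=\l s-\frac{\a^2 s}{2(1-s)}$ is precisely the phase \eqv(bb.35) and $h(s)=\sqrt{\frac{1-s}{2\pi\a^2}}\,\eee^{-\frac{\a^2L+\a K}{1-s}}\bigl(1-\eee^{-\frac{2b\a s}{1-s}}\bigr)$ is a fixed, continuous, strictly positive function on $(0,1)$. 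Since $2\l>\a^2$, the phase $f$ attains its maximum on $(0,1)$ at the interior point $s^*=1-\a/\sqrt{2\l}$, with $f(s^*)=\frac12(\a-\sqrt{2\l})^2$ and $f''(s^*)=-\frac{2\l\sqrt{2\l}}{\a}<0$, as computed in \eqv(bb.36)---\eqv(bb.37). I would then run the Laplace method in the lower-bound direction: restrict the integral to a small interval $[s^*-\e,s^*+\e]$, bound $h$ from below by a constant arbitrarily close to $h(s^*)$ and $f$ from below by its quadratic Taylor expansion at $s^*$, estimate the resulting Gaussian integral from below, and let $\e\downarrow0$; this recovers the factor $\sqrt{2\pi/(t|f''(s^*)|)}$ up to $1+o(1)$, just as Lemma \thv(lem.laplace) is used in the proof of Lemma \thv(bb.30). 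Finally I would evaluate $h$ at $s^*$: using the identity $\frac{1}{1-s^*}=\frac{\sqrt{2\l}}{\a}$ one rewrites $\eee^{-(\a^2L+\a K)/(1-s^*)}$ and $\eee^{-2b\a s^*/(1-s^*)}$ in the closed forms appearing on the right-hand side of \eqv(L.2), in particular the bracket becomes $1-\eee^{-2b(\sqrt{2\l}-\a)}$, and I would absorb the remaining $\l$- and $\a$-dependent factors, which are constant as $t\to\infty$, into $C$.

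I expect the main difficulty to be bookkeeping rather than a new idea. The genuinely delicate probabilistic input---simultaneously forcing a macroscopic forward excursion of the bridge ($\{T^K_t>st\}$) and suppressing its negative excursions ($\{U^b_t\le L\}$)---is already supplied by Lemma \thv(low.3); here one only has to (i) keep track of the powers of $t$ produced by the change of variables, by the $t^{-3/2}$ in \eqv(low.4), and by the Gaussian integral, (ii) verify that the one-sided Laplace estimate loses nothing beyond the $o(1)$ that is already allowed, and (iii) reduce the $s^*$-prefactor to the precise shape of \eqv(L.2). The hypothesis $2\l>\a^2$ is used only to place $s^*$ in the interior of $(0,1)$; when $2\l\le\a^2$ the phase $f$ is maximised at the endpoint $s=0$, no macroscopic forward excursion contributes, and the mechanism producing the accelerated speed disappears, which is why that regime is excluded here.
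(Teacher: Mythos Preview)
Your proposal is correct and matches the paper's approach exactly: the paper's proof consists of the single sentence ``The proof is a rerun of the proof of Lemma \thv(bb.30) using Lemma \thv(low.3) instead of Theorem \thv(bb.1),'' and you have spelled out precisely that rerun---layer-cake, substitution $r=st$, insertion of \eqv(low.4), Laplace method at $s^*=1-\a/\sqrt{2\l}$, and evaluation of the prefactor. One small remark: plugging \eqv(low.4) in gives $\eee^{-\a K/(1-s^*)}=\eee^{-K\sqrt{2\l}}$ rather than the $\eee^{-K\sqrt{2\l}(1+\sqrt 2)}$ in \eqv(L.2), but since $K>0$ your bound is stronger and immediately implies the stated one.
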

\begin{proof}
The proof is a rerun of the  proof of Lemma \thv(bb.30) using Lemma \thv(low.3) instead of Theorem \thv(bb.1).
\end{proof}
Finally, we need  bounds on the Laplace transform when $2\l\leq\a^2$. The following lemma confirms that in this case the 
Laplace transform is essentially of order one.
\begin{lemma}\TH(lem.small)
If $2\l\leq\a^2$ and $K<0$,
\be\Eq(lisa.lan5)
1\leq\E\left[\eee^{\l  T^K_t}\right] \leq 1+ \begin{cases}
  \frac{2}{\sqrt{\pi K}} \l \eee^{-2\l K/\a}\frac{2}{\a^2-2\l}, & \mbox{if }2\l<\a^2\\
    \frac{2}{\sqrt{\pi K}} \l \eee^{-2\l K/\a} (t+2K/\a), & \mbox{if }2\l=\a^2
 \end{cases}.
\ee
and 
\be\Eq(lisa.lan10)
\E\left[\eee^{\l T_t^K}\1_{U_t^b\leq L}\right] \geq 1-\eee^{-b^2/2t}.
\ee
\end{lemma}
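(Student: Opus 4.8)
The two lower-type estimates are routine: for the lower bound in \eqv(lisa.lan5) one simply uses $\eee^{\l T^K_t}\geq1$, and for \eqv(lisa.lan10) one restricts the expectation to the event $\{U^b_t=0\}\subseteq\{U^b_t\leq L\}$ (using $L>0$), on which $\eee^{\l T^K_t}\geq1$, so that the left-hand side of \eqv(lisa.lan10) is at least $\P(U^b_t=0)=\P(\min_{0\leq s\leq t}\zet^t_{0,0}(s)>-b)$; the classical reflection-principle formula for the minimum of a Brownian bridge together with Brownian scaling gives $\P(\min_{0\leq s\leq t}\zet^t_{0,0}(s)\leq-b)=\eee^{-2b^2/t}\leq\eee^{-b^2/(2t)}$, which is \eqv(lisa.lan10).

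For the upper bound in \eqv(lisa.lan5) the plan is to combine the layer-cake identity \eqv(bb.32), the last-exit decomposition \eqv(notsonice.1), and the explicit density \eqv(nice.1). Since $\P(T^K_t>r)=1$ for $r\leq0$ and $=0$ for $r\geq t$, \eqv(bb.32) reads $\E[\eee^{\l T^K_t}]=1+\int_0^t\l\eee^{\l r}\P(T^K_t>r)\,dr$. The crucial step is to dominate the tail of $T^K_t$ by that of the last-exit time $g_t$ from \eqv(g.1): since the conditional probability $\phi_K(u,r)$ in \eqv(notsonice.1) is at most $1$, one has $\P(T^K_t>r)=\int_r^t\P(g_t\in du)\,\phi_K(u,r)\leq\P(g_t\geq r)$, and interchanging the order of integration gives
\[
\E\big[\eee^{\l T^K_t}\big]\;\leq\;1+\int_0^t\big(\eee^{\l u}-1\big)\,\P\big(g_t\in du\big).
\]
I would then feed in the density \eqv(nice.1): writing $\tfrac{t(\a u+K)^2}{2u(t-u)}=\tfrac{\a^2u}{2}+\a K+\tfrac{K^2}{2u}+\tfrac{(\a u+K)^2}{2(t-u)}$ and checking that, for $t$ so large that $\a t+K>0$, the factor $(1-u/t)^{-3/2}\eee^{-(\a u+K)^2/(2(t-u))}$ is bounded by an absolute constant on $(0,t)$ — it is $\leq2^{3/2}$ on $(0,t/2]$, and on $(t/2,t)$ the exponential factor (with $\a u+K\gg\sqrt t$) absorbs the blow-up of $(1-u/t)^{-3/2}$ as $u\uparrow t$ — one obtains $\P(g_t\in du)\leq C\a\,u^{-1/2}\eee^{-\a K}\eee^{-\a^2u/2-K^2/(2u)}\,du$ with an absolute $C$. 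Using $\eee^{\l u}-1\leq\l u\eee^{\l u}$ to extract a factor $\l$, the problem is reduced to the one-dimensional integral $\int_0^t u^{1/2}\eee^{-(\a^2/2-\l)u-K^2/(2u)}\,du$.

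When $2\l<\a^2$ this integral may be extended to $(0,\infty)$, where it has the closed form $\tfrac{\sqrt\pi}{2}a^{-3/2}(1+2\sqrt{ab})\eee^{-2\sqrt{ab}}$ with $a=\a^2/2-\l$, $b=K^2/2$; combined with the prefactor $\eee^{-\a K}$ and the elementary inequality $\a-\sqrt{\a^2-2\l}\leq2\l/\a$ (so that $\eee^{-\a K}\eee^{-2\sqrt{ab}}=\eee^{|K|(\a-\sqrt{\a^2-2\l})}\leq\eee^{-2\l K/\a}$, recalling $K<0$), this gives a bound of the form announced in \eqv(lisa.lan5). When $2\l=\a^2$ the integral over $(0,\infty)$ diverges, so it must be kept truncated at $t$; here $(\eee^{\l u}-1)\eee^{-\a^2u/2}=1-\eee^{-\l u}\leq1$, so the relevant integral is $\int_0^t u^{-1/2}\eee^{-K^2/(2u)}\,du=O(\sqrt t)$, yielding a bound growing polynomially in $t$, in particular the one in \eqv(lisa.lan5) for $t$ large.

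The main obstacle — and the reason for going through $g_t$ rather than through Theorem \thv(bb.1) directly — is \emph{uniformity in $r$ near zero}: the sharp asymptotics of Theorem \thv(bb.1) for $\P(T^K_t>st)$ carry a factor $s^{-3/2}$ that diverges as $s\downarrow0$, whereas in truth $\P(T^K_t>st)\to\P(T^K_t\geq0)=1$ there, so one cannot simply integrate that asymptotic formula against $\l\eee^{\l r}$. Handling instead the whole law of $g_t$ via its exact density \eqv(nice.1) makes the small-$r$ contribution harmless, and the tail of $g_t$ is then controlled by the Gaussian-type integral above, which converges exactly when $2\l<\a^2$; at the threshold $2\l=\a^2$ the Laplace transform genuinely blows up as $t\to\infty$, so the polynomial-in-$t$ correction cannot be avoided.
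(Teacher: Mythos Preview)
Your proof is correct and follows essentially the same route as the paper: the lower bounds are handled identically (via $\P(U^b_t=0)$ and the reflection-principle formula for the bridge minimum), and the upper bound in both cases rests on the domination $\P(T^K_t>r)\leq\P(g_t\geq r)$ combined with the layer-cake identity \eqv(bb.32). The only difference is that the paper works directly with the CDF of $g_t$ from Lemma~\thv(g.3) and standard Gaussian tail bounds (see \eqv(lisa.lan7)--\eqv(lisa.lan9)), whereas you first swap the order of integration and then feed in the density \eqv(nice.1); your route is slightly cleaner and in fact yields a sharper $O(\sqrt t)$ growth in the critical case $2\l=\a^2$ versus the paper's $O(t)$.
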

\begin{proof}
Starting from \eqv(bb.32) we simply use
\be\Eq(lisa.lan6)
\P(T_t^K\geq r)\leq \P(g_t\geq r).
\ee
The latter has been computed in Lemma \thv(g.3) and we have, for $\a r>-2K$,
\bea\Eq(lisa.lan7)
 \P(g_t\geq r)&=&\eee^{-\frac{2K(\a t+K)}{t}}
\Phi\left(-\sfrac{(\a tr+K(2r-t))}{\sqrt{rt(t-r)}}\right) +1- \Phi\left(\sfrac{(\a tr)+Kt)} {\sqrt{rt(t-r)}}\right)\\
&\leq &
\eee^{-2K\a} \frac{\sqrt{rt(t-r)}}{(\a tr+K(2r-t))\sqrt{2\pi}}\eee^{-\frac{\left(\a tr+K(2r-t)\right)^2}{2rt(t-r)}} \nonumber
+\frac{\sqrt{rt(t-r)}}{(\a tr+Kt)\sqrt{2\pi}}\eee^{-\frac{\left(\a tr+Kt\right)^2}{2rt(t-r)}}\nonumber\\
&\leq&\frac{\sqrt{rt(t-r)}}{\sqrt{2\pi}} \eee^{-\frac{\a K t}{t-r} -\frac{\a^2 tr}{2(t-r)}}\left(\frac{1}{\a tr+K(2r-t)}+ \frac{1}{\a tr+Kt}\right)\nonumber\\
&\leq&\frac{4}{\sqrt{2\pi\a r}} \eee^{-\frac{\a K t}{t-r}-\frac{\a^2 tr}{2(t-r)}},
\eea
by standard Gaussian tail bounds. Plugging this into \eqv(bb.32) we get
\bea\Eq(lisa.lan8)
\E\left[\eee^{\l  T^K_t}\right]& \leq& \int_{-\infty}^{-2K/\a} \l \eee^{\l r} dr + \int_{-2K/\a}^t  \l \eee^{\l r}\frac{4}{\sqrt{2\pi\a r}} \eee^{-\frac{\a K t}{t-r}-\frac{\a^2 tr}{2(t-r)}}dr\nonumber\\
&=& e^{-2K\l/\a} + \int_{0}^{t+2K/\a}  \l \eee^{\l z-\l 2K/\a}\frac{4}{\sqrt{2\pi\a (z-2K/\a)}} \eee^{ -\frac{\a^2 tz}{2(t-z+2K/\a)}}dz,
\eea
where $z=r+2K/\a$.
The second summand in \eqv(lisa.lan8) is bounded from above by
\be\Eq(lisa.lan9)
 \frac{2}{\sqrt{\pi K}} \l \eee^{-2\l K/\a} \int_{0}^{t+2K/\a}  \eee^{\l z -\frac{\a^2 z}{2}}dz  =
 \begin{cases}
  \frac{2}{\sqrt{\pi K}} \l \eee^{-2\l K/\a}\frac{2}{\a^2-2\l}, & \mbox{if }2\l<\a^2\\
    \frac{2}{\sqrt{\pi K}} \l \eee^{-2\l K/\a} (t+2K/\a), & \mbox{if }2\l=\a^2
 \end{cases}.
\ee
To prove \eqv(lisa.lan10) we bound the left hand side of \eqv(lisa.lan10) from below by
\be\Eq(lisa.lan11)
\P\left(U_t^b\leq L\right)\geq \P\left(U_t^b=0\right) = 1-\eee^{-b^2/2t}.
\ee
This finishes the proof of Lemma \thv(lem.small).
\end{proof}

\section{Controlling the wave}
 
 We use the Brownian bridge estimates from the previous section to give a rigorous version of the heuristics outlined at the end of Section 3.

 \subsection{Bounds on the speed of the wave}
 We first control the behaviour of solutions on the exponential scale for large $t$.
  We begin with an upper bound.
 
 \begin{lemma}
 \TH(speed.10) \begin{itemize}
 \item[(i)] Let $u$ be such that
 \be\Eq(lisa.lan3)
 2\left(1-\tg\right)> \left(\sqrt{2}-u\right)^2,
 \ee
Then, for all $\e>0$ small enough, there exists a constant $C>0$ such that 
 \be\Eq(speed.2.2)
w(t,ut)\leq \frac{C}{u\sqrt{t}}\eee^{t \left(2-\tb-2\sqrt{(1-\tg)(1-\e)}+\sqrt 2 u\left(\sqrt{(1-\tg)(1-\e)}-1\right)\right) }.
 \ee
In particular, $w(t,ut)$ decays exponentially fast in $t$ for
 \be\Eq(speed.3)
 u>u^*\equiv \sqrt 2-\frac{\tb\left(\sqrt{1-\tg}+1\right)}{\sqrt 2 \tg}.
 \ee
 \item[(ii)] Let $u$ be such that
 \be\Eq(lisa.lan4)
 2\left(1-\tg\right)\leq \left(\sqrt{2}-u\right)^2,
 \ee
Then, for all $\e>0$ small enough, there exists a constant $C>0$ such that 
 \be\Eq(speed.2.1)
w(t,ut)\leq  \frac{C}{u\sqrt t}\eee^{-\sfrac{u^2t}{2}+t( (\tg-\tb) +\e(1-\tg)) }
 \ee
 \end{itemize}
 \end{lemma}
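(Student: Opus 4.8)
The plan is to start from the Feynman-Kac representation in the form \eqv(fk.8), where we drop the $w$ term (which only helps us for an upper bound since it contributes negatively) and use the approximation-free bound $\o\le 1$ everywhere except that we exploit the explicit knowledge of $\o$ through the substitution that converts the $\o$-integral into a functional of the Brownian bridge excursion time. Concretely, for $x=ut$ with $u<\sqrt2$, set $\a\equiv\sqrt2-u>0$ and observe that the argument $x\tfrac{t-s}{t}+\zet^t_{0,0}(s)-\sqrt2(t-s)$ equals $-\a(t-s)+\zet^t_{0,0}(s)$ (after setting $a=0$, $y=0$, which is legitimate for the upper bound since $y\le 0$ only decreases the exponent and the Gaussian mass is concentrated). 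Since $\o$ is bounded below by, say, $1-\e$ on $(-\infty,-M_\e]$ for a suitable $M_\e$, we get $(1-\tg)\o(-\a(t-s)+\zet^t_{0,0}(s))\ge (1-\tg)(1-\e)\1_{-\a(t-s)+\zet^t_{0,0}(s)\le -M_\e}$, hence the exponent is bounded above by $(1-\tb)t-(1-\tg)(1-\e)(t-T^{M_\e}_t)$ where $T^K_t$ is the excursion time from \eqv(fk.12) with the identification $K\leftrightarrow -M_\e$ (negative $K$), up to the time-reversal in \eqv(fk.12.1). Thus $w(t,ut)\le \tfrac{C}{\sqrt t}\,\eee^{-u^2t/2}\,\eee^{(\tg-\tb+\e(1-\tg))t}\,\E\!\left[\eee^{(1-\tg)(1-\e)T^{-M_\e}_t}\right]$.

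Now I would apply the Laplace-transform estimates of the previous subsection with $\l=(1-\tg)(1-\e)$ and a (fixed or slowly growing) negative $K$. In case (i), the hypothesis \eqv(lisa.lan3) says precisely $2\l>\a^2$ (for $\e$ small), so Lemma \thv(bb.30) applies and gives $\E[\eee^{\l T^K_t}]\sim \eee^{t(\a-\sqrt{2\l})^2/2}$ times lower-order factors. Plugging in $\a=\sqrt2-u$ and $\l=(1-\tg)(1-\e)$, the exponential rate becomes
\[
-\frac{u^2}{2}+(\tg-\tb)+\e(1-\tg)+\frac{(\sqrt2-u-\sqrt{2(1-\tg)(1-\e)})^2}{2},
\]
and expanding the square and collecting terms (using $(\sqrt2-u)^2/2=1-\sqrt2 u+u^2/2$) one checks this equals $2-\tb-2\sqrt{(1-\tg)(1-\e)}+\sqrt2 u(\sqrt{(1-\tg)(1-\e)}-1)$, which is exactly \eqv(speed.2.2). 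In case (ii), \eqv(lisa.lan4) gives $2\l\le\a^2$, so instead we use Lemma \thv(lem.small), which says $\E[\eee^{\l T^K_t}]\le 1+(\text{at most polynomial in }t)$; the polynomial factor is absorbed into the $C/\sqrt t$ (or a harmless $\eee^{\e t}$), and we are left with $w(t,ut)\le \tfrac{C}{u\sqrt t}\eee^{-u^2t/2+t(\tg-\tb+\e(1-\tg))}$, which is \eqv(speed.2.1). The assertion that \eqv(speed.2.2) decays exponentially for $u>u^*$ follows by noting that the rate in \eqv(speed.2.2), sent $\e\to0$, is $\sqrt2$-linear in $u$ with the appropriate sign: it vanishes at $u^*=\sqrt2-\tb(\sqrt{1-\tg}+1)/(\sqrt2\tg)$ (this is a direct algebraic solve of $2-\tb-2\sqrt{1-\tg}+\sqrt2 u(\sqrt{1-\tg}-1)=0$) and is strictly negative for larger $u$ since the coefficient $\sqrt2(\sqrt{1-\tg}-1)<0$.

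The steps I would carry out in order: (1) reduce \eqv(fk.8) to a clean bound of the form $w(t,ut)\le \tfrac C{\sqrt t}\eee^{-u^2t/2+(\tg-\tb)t}\E[\eee^{\l T^K_t}]$ by discarding $w$, discarding the (favorable but only-makes-it-bigger) $y$-integration via a Gaussian bound $\tfrac1{\sqrt{2\pi t}}\int_{-\infty}^0\eee^{-(ut-y)^2/2t}dy\le \tfrac C{u\sqrt t}\eee^{-u^2t/2}$, and replacing $\o$ by $(1-\e)$ times an indicator using \eqv(fk.14) (the Gaussian decay of $\o$ at $+\infty$ means $\o\ge 1-\e$ eventually to the left, so the shift $M_\e$ is a constant depending only on $\e$); (2) identify the excursion functional with $T^K_t$, being careful about the sign of $K$ and the time-reversal symmetry \eqv(fk.12.1); (3) invoke Lemma \thv(bb.30) in case (i) and Lemma \thv(lem.small) in case (ii); (4) do the algebraic simplification of the exponent and solve for $u^*$. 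The main obstacle is step (1), specifically being honest about the approximation $\o\approx$ indicator: one must keep track that the "slab" where $\o$ is neither close to $0$ nor close to $1$ has width $O_\e(1)$ independent of $t$, contributes at most $O(1)$ to the exponent there, and on the far-left region $\o\ge 1-\e$ so the loss is only $\e(1-\tg)t$ — this is where the $\e$ in the statement comes from and why one cannot take $\e=0$. A secondary technical point is that in Lemma \thv(bb.30) and Lemma \thv(lem.small) the parameter $K$ is fixed while here $M_\e$ is fixed too, so no $\log t$-growth of $K$ is needed for the upper bound; the prefactors are all polynomially bounded and get swallowed by $C/(u\sqrt t)$ together with an extra $\eee^{\e t}$ slack if convenient.
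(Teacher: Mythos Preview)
Your proposal is correct and follows essentially the same route as the paper: drop the $w$-term in \eqv(fk.8), split the $\o$-integral via an indicator at height $\a(t-s)+K$ with $K<0$ chosen so that $\o\geq 1-\e$ below the line (this uses the left-tail asymptotic \eqv(fkpp.15), not \eqv(fkpp.14) as you cite), reduce to $\E[\eee^{(1-\tg)(1-\e)T^K_t}]$, and then invoke Lemma~\thv(bb.30) in case~(i) and Lemma~\thv(lem.small) in case~(ii), followed by the same algebraic simplification of the exponent. The only cosmetic slips are the wrong tail citation and the flip between $T^{M_\e}_t$ and $T^{-M_\e}_t$; the paper also keeps the $y$-integral and bounds the exponent uniformly over $y\leq 0$ rather than ``setting $y=0$'', but this amounts to the same thing.
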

 \begin{remark} 
 Lemma \thv(speed.10) implies that the  solution is exponentially small if $u>u_c$ (given in \eqv(lisa.lan23)), hence the wave speed is not larger than $u_c$.
  \end{remark}
 
 \begin{proof}
 We bound the integral in the Feynman-Kac representation \eqv(fk.5) from above as follows.
 \bea
 \Eq(speed.14)
 &&\int_0^t 
 \biggl(1-\tb-(1-\tg)\o\left(x\sfrac {t-s}t +\sfrac st y +\zet^t_{0,0}(s)+a-\sqrt  2(t-s)\right)\nonumber\\
 &&\quad-\tg w\left(t-s, x\sfrac {t-s}t +\sfrac st y
 +\zet_{0,0}^t(s)\right)
\biggr)ds\nonumber\\
&&\leq
\int_0^t \left(\1_{\zet_{0,0}^t(s)\geq (\sqrt  2-x/t)(t-s) +K}
+\1_{\zet_{0,0}^t(s)< (\sqrt  2-x/t)(t-s) +K}\right)\nonumber\\
&&\quad\times
 \left(1-\tb-(1-\tg)\o\left(x\sfrac {t-s}t +\sfrac st y +\zet^t_{0,0}(s)+a-\sqrt  2(t-s)\right)\right)ds.
\eea
Using the asymptotic of the lower tail \eqv(fkpp.15),
we see that on the second indicator function, for all $y\leq 0$,
\be\Eq(speed.15)
 \o\left(x\sfrac {t-s}t +\sfrac st y +\zet^t_{0,0}(s)+a-\sqrt  2(t-s)\right)
 \geq 
 \o\left(a+K \right)\geq 1-c \eee^{(2-\sqrt 2)(a+K)},
 \ee
which is larger than $1-\e$ for $-K$ large enough. On the first indicator function we use 
that $\o\geq 0$. This leads to 
\bea\Eq(speed.16)
&&\int_0^t 
 1-\tb-(1-\tg)\o\left(x\sfrac {t-s}t +\sfrac st y +\zet^t_{0,0}(s)+a-\sqrt  2(t-s)\right)
ds\nonumber\\
&&\leq(\tg-\tb)t+\e(1-\tg)t+(1-\tg)(1-\e)T^{K}_t.
\eea
 Recalling \eqv(fk.8), if $ 2\left(1-\tg\right)> \left(\sqrt{2}-u\right)^2$, we obtain the upper bound
\bea\Eq(speed.17)\nonumber
 w(t,x)&\leq& \sfrac {1-\tb/\tg}{\sqrt{2\pi t}}\int_{-\infty}^0 \eee^{-\sfrac{(x-y)^2}{2t}}
 \eee^{t( (\tg-\tb) +\e(1-\tg) )} \E\left[\eee^{(1-\tg)(1-\e)T^K_t}\right]dy
\\ \nonumber
&\leq& \sfrac {1-\tb/\tg}{\sqrt{2\pi x^2/t}} \eee^{-\sfrac{x^2}{2t}}
 \eee^{t( (\tg-\tb) +\e(1-\tg)) }
 \eee^{t{\left((\sqrt 2-x/t)-\sqrt{2(1-\tg)(1-\e)}\right)^2}/2-K\sqrt{2(1-\tg)(1-\e)}}
\nonumber\\&&\times\sfrac {\sqrt{2(\sqrt 2-x/t)}}{\sqrt{\pi (\sqrt{2(1-\tg)(1-\e)}-(\sqrt 2-x/t))^3}}
{\sqrt{2(1-\tg)}|K|} \left(1+o(1)\right) .
 \eea
The exponential terms are  (for $x=ut$),
\be
\eee^{-t\left(\sfrac{u^2}{2}+
  \tb-\tg -\e(1-\tg)
-{\left((\sqrt 2-u)-\sqrt{2(1-\tg)(1-\e)}\right)^2}/2\right)}
 = \eee^{t \left(2-\tb-2\sqrt{(1-\tg)(1-\e)}+\sqrt 2 u\left(\sqrt{(1-\tg)(1-\e)}-1\right)\right)}.
\ee
This implies the first part of the Lemma.

For the second one, if  $2\left(1-\tg\right)\leq\left(\sqrt{2}-u\right)^2$ we use the bound from Lemma \thv(lem.small) and get
\bea\Eq(speed.17.1)
 w(t,x)&\leq& \sfrac {1-\tb/\tg}{\sqrt{2\pi t}}\int_{-\infty}^0 \eee^{-\sfrac{(x-y)^2}{2t}}
 \eee^{t( (\tg-\tb) +\e(1-\tg) )} \E\left[\eee^{(1-\tg)(1-\e)T^K_t}\right]dy
\\ \nonumber
&\leq& \sfrac {1-\tb/\tg}{\sqrt{2\pi x^2/t}} \eee^{-\sfrac{x^2}{2t}}
 \eee^{t( (\tg-\tb) +\e(1-\tg)) }\nonumber\\&&\nonumber
\times \begin{cases}
1+  \frac{2(1-\tg)}{\sqrt{\pi K}} \eee^{-2(1-\tg)(1-\e) K/\a}\frac{2}{\a^2-2(1-\tg)(1-\e)}, & \mbox{if }2(1-\tg)(1-\e)<\a^2,\\
 1+   \frac{2(1-\tg)}{\sqrt{\pi K}}  \eee^{-2(1-\tg)(1-\e) K/\a} (t+2K/\a), & \mbox{if }2(1-\tg)(1-\e)=\a^2.
 \end{cases}
 \eea
 The exponential terms are,  for $x=ut$,
 \be\Eq(speed.100)
\eee^{-\sfrac{u^2t}{2}+t( (\tg-\tb) +\e(1-\tg)) }
 \ee
This implies that $w$ decays exponentially fast for $u>\sqrt{2(\tg-\tb)}$.
 \end{proof}

Next, we need a corresponding lower bound. For this we use the lower bound from Lemma \thv(lem.lap.low).
 \begin{lemma}
 \TH(speed.1) Let $b>0$.
 Assume that $u$ is such that $w(t,ut+z)\leq \e$, for all $z\geq-2b$ and $t$ large enough.
 \begin{itemize}
 \item[(i)] 
If $ 2\left(1-\tg\right)\left(1-\e\right)> \left(\sqrt{2}-u\right)^2$
 then, for some constant $C>0$ depending on $K,L,b$ and $u$,
 \be\Eq(speed.2)
 w(t,ut) \geq {C} t^{-1/2}\eee^{-tu\left(\sqrt{2}-\sqrt{2(1-\tg)(1- \e)}\right)}\eee^{t\left(1-\sqrt{(1-\tg)(1- \e)}\right)^2t} \eee^{(\tg(1-\e)-\tb) t} .
 \ee
 This contradicts the hypothesis, unless
 \be\Eq(speed.3.1)
 u> \sqrt 2-\sfrac{\left(\tb+\e\right)\left(\sqrt{\left(1-\tg\right)\left(1-\e\right)}+1\right)}{\sqrt 2\left( \tg(1-\e) +\e\right)}.
 \ee
 \item[(ii)] If $ 2\left(1-\tg\right)\left(1-\e\right)\leq \left(\sqrt{2}-u\right)^2$, then there exists a constant $C>0$ depending on $K,b$ and $u$
 \be\Eq(lisa.lan14)
 w(t,ut)\geq  \frac{C}{t^{3/2}} \eee^{(\tg(1-\e)-\tb) t-\frac{u^2t}{2}} \ee
  This contradicts the hypothesis unless
\be\Eq(lisa.lan15)
u>\sqrt{2\left(\tg(1-\e)-\tb\right)}.
\ee
  \end{itemize}

 \end{lemma}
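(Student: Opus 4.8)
The plan is to feed the exact Feynman--Kac representation \eqv(fk.5) (with the Heaviside datum $w(0,y)=(1-\tb/\tg)\1_{y\le 0}$, so the $y$-integral runs over $(-\infty,0]$) into the same computation as the heuristics of Section~3.3, but now \emph{keeping} the term $\tg w$ and controlling it by the hypothesis $w(t-s,\cdot)\le\e$ on $[u(t-s)-2b,\infty)$ instead of discarding it; this is exactly what produces the factor $(1-\tg)$ in the effective exponent. First I would restrict the $y$-integral to $y\in[-b,0]$, where $\eee^{-(ut-y)^2/2t}\ge c_b\,\eee^{-u^2t/2}$, so that $\tfrac1{\sqrt{2\pi t}}\int_{-b}^0\eee^{-(ut-y)^2/2t}\,dy\ge c_b'\,t^{-1/2}\eee^{-u^2t/2}$. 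For such $y$ the argument of $\o$ at time $s$ is $-\a(t-s)+\zet^t_{0,0}(s)+\tfrac st y+a$ with $\a=\sqrt 2-u$, i.e.\ the line defining $T^K_t$ in \eqv(fk.12), while the spatial point at which $w$ is evaluated is $u(t-s)+\tfrac st y+\zet^t_{0,0}(s)$.

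Next I would fix a large $K>0$ (depending only on $\e,a,b$, so that $\o(K+a-b)\le\e$), a fixed $L>0$, and work on the event $\{U^b_t\le L\}$ with $U^b_t$ as in \eqv(low.1). On this event I split $[0,t-r_0]$ ($r_0$ a fixed constant) into the set $S_1$ where $\zet^t_{0,0}(s)\ge\a(t-s)+K$ (measure $\ge T^K_t-r_0$), the set $S_2$ where $-b\le\zet^t_{0,0}(s)<\a(t-s)+K$, and the set $S_3$ where $\zet^t_{0,0}(s)<-b$ (measure $\le L$). Since $\tfrac st y\ge -b$ for $y\in[-b,0]$, one checks path by path: on $S_1$ the $\o$-argument is $\ge K+a-b$ so $(1-\tg)\o\le(1-\tg)\e$, and $\zet^t_{0,0}(s)\ge K$ so the spatial point is $\ge u(t-s)+K-b\ge u(t-s)-2b$, whence $\tg w\le\tg\e$ and the integrand is $\ge 1-\tb-\e$; on $S_2$ the spatial point is still $\ge u(t-s)-2b$, so $\tg w\le\tg\e$, and bounding $\o\le 1$ gives integrand $\ge\tg-\tb-\tg\e$; on $S_3$, and on the boundary layer $[t-r_0,t]$ where the hypothesis cannot be invoked, the elementary bounds $0\le\o\le 1$ and $0\le w\le 1-\tb/\tg$ already give integrand $\ge 0$. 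Summing, $\int_0^t(\cdots)\,ds\ge(1-\tg)(1-\e)T^K_t+(\tg(1-\e)-\tb)t-c(r_0,\e,L)$ on $\{U^b_t\le L\}$, and since the Feynman--Kac integrand is nonnegative for every path,
\[
w(t,ut)\ \ge\ \frac{C}{\sqrt t}\,\eee^{-u^2t/2+(\tg(1-\e)-\tb)t}\,\E\!\left[\eee^{(1-\tg)(1-\e)T^K_t}\1_{U^b_t\le L}\right].
\]

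It then remains to insert the bridge estimates of Section~4. In case (i), $2(1-\tg)(1-\e)>(\sqrt 2-u)^2$, so Lemma~\thv(lem.lap.low) with $\l=(1-\tg)(1-\e)$ and $\a=\sqrt 2-u$ gives $\E[\eee^{\l T^K_t}\1_{U^b_t\le L}]\ge(\text{const}(K,L,b))\,\eee^{t(\sqrt 2-u-\sqrt{2(1-\tg)(1-\e)})^2/2}$, and the elementary identity
\[
-\tfrac{u^2}2+(\tg(1-\e)-\tb)+\tfrac12\big(\sqrt2-u-\sqrt{2(1-\tg)(1-\e)}\big)^2=-u\big(\sqrt2-\sqrt{2(1-\tg)(1-\e)}\big)+\big(1-\sqrt{(1-\tg)(1-\e)}\big)^2+\tg(1-\e)-\tb
\]
turns the displayed bound into \eqv(speed.2). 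In case (ii), $2(1-\tg)(1-\e)\le(\sqrt 2-u)^2$, I would only use the trivial bound $\E[\eee^{\l T^K_t}\1_{U^b_t\le L}]\ge\P(U^b_t=0)=1-\eee^{-b^2/2t}\ge c/t$ for $t$ large (reflection principle for the bridge), which gives \eqv(lisa.lan14). Finally, in both cases, taking $z=0$ in the hypothesis forces $w(t,ut)\le\e$, so the exponential rate in the lower bound must be $\le0$; solving that inequality for $u$, with $q\equiv\sqrt{(1-\tg)(1-\e)}$ and $\tg(1-\e)+\e=1-q^2$, yields precisely \eqv(speed.3.1) in case (i) and \eqv(lisa.lan15) in case (ii).

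As for where the work really lies: the Feynman--Kac bookkeeping above is routine once Lemma~\thv(lem.lap.low) is available, so within this proof the only delicate point is verifying in each of the three path-regions $S_1,S_2,S_3$ that the spatial point visited by the bridge indeed remains in $[u(t-s)-2b,\infty)$, so that the induction hypothesis on $w$ applies and the $\tg w$ term costs only $O(\e)$ in the exponential rate (and that on $S_3$, where it need not, the integrand is still $\ge 0$). The genuinely hard estimate sits upstream, in Lemma~\thv(lem.lap.low)/Lemma~\thv(low.3), namely obtaining the sharp exponential constant $\tfrac12(\sqrt2-u-\sqrt{2(1-\tg)(1-\e)})^2$ for $\E[\eee^{\l T^K_t}\1_{U^b_t\le L}]$ while simultaneously suppressing the downward excursions of the Brownian bridge.
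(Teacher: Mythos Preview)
Your proposal is correct and follows essentially the same route as the paper: restrict the Feynman--Kac integral to $y\in[-b,0]$ and to the event $\{U^b_t\le L\}$, use the hypothesis $w\le\e$ whenever the bridge stays above $-b$ so that the spatial point lies in $[u(t-s)-2b,\infty)$, bound $\o\le\e$ when the bridge is above the line $\a(t-s)+K$ and $\o\le 1$ otherwise, and then invoke Lemma~\thv(lem.lap.low) in case (i) and the reflection bound $\P(U^b_t=0)=1-\eee^{-b^2/2t}$ in case (ii). The only cosmetic differences are that the paper treats the $\o$- and $w$-integrals separately (two two-way splits) rather than your single three-way split $S_1\cup S_2\cup S_3$, and that you are slightly more careful in carving out the boundary layer $[t-r_0,t]$ where $t-s$ may fail the ``$t$ large enough'' clause of the hypothesis --- a detail the paper glosses over but which costs only a bounded constant in the exponent, exactly as you note.
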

 \begin{remark}
From \eqv(speed.3.1) it follows that the speed is not smaller than $\sqrt{2}-\frac{\tb}{\sqrt{2}\tg} \left(1+\sqrt{1-\tg}\right)$ and from \eqv(lisa.lan15)
 it follows that it is not smaller than  $\sqrt{2\left(\tg-\tb\right)}$. Altogether, this implies 
 that the speed is not smaller than the maximum of the two, i.e. it is at least
%  \be
%  \max\left(\sqrt{2}-\frac{\tb}{\sqrt{2}\tg} \left(1+\sqrt{1-\tg}\right),\sqrt{2\left(\tg-\tb\right)}\right).
%  \ee
 \end{remark}
 
 \begin{proof}
In the representation \eqv(fk.5) of $w(t,x)$ we would like to use the assumption of the 
lemma to argue that the term involving $w$ in the exponent is negligible but this could 
be spoiled by large negative excursions of the Brownian bridge. To avoid this problem 
we restrict the expectation in \eqv(fk.5) on the Brownian bridge to a subset 
$\{U_t^b\leq L\}$, For any $L>0$ and $b>0$, 
 \bea\Eq(L.3)
w(t,x)\geq&& \sfrac{1}{\sqrt{2 \pi t}}\int_{-b}^0dy \eee^{-\frac{(x-y)^2}{2t}} \nonumber\\
&\times&  \E\Biggl[\exp\Biggl(\int_0^t 
 \biggl(1-\tb-(1-\tg)\o\left(x\sfrac {t-s}t +\sfrac st y +\zet^t_{0,0}(s)+a-\sqrt  2(t-s)\right)\nonumber\\
 &&-\tg w\left(t-s, x\sfrac {t-s}t +\sfrac st y
 +\zet_{0,0}^t(s)\right)
\biggr)ds\Biggr)\1_{U_t^b\leq L}\Biggr],
\eea
Note that on the event $\{U_t^b\leq L\}$, we have
\be\Eq(L.4)
-\int_0^t \tg w\left(t-s, x\sfrac {t-s}t +\sfrac st y
 +\zet_{0,0}^t(s)\right)ds
\geq
-\tg L-\tg\e t.
\ee
Hence, \eqv(L.3) is bounded from below by
 \bea\Eq(L.3.1)
&& \eee^{-\tg L+(1-\tb-\tg\e) t}\sfrac{1-\tb/\tg}{\sqrt{2 \pi t}}\int_{-b}^0dy \eee^{-\frac{(x-y)^2}{2t}} \\ \nonumber
&\times& \E\Biggl[\exp\Biggl(-\int_0^t 
 (1-\tg)\o\left(x\sfrac {t-s}t +\sfrac st y +\zet^t_{0,0}(s)+a-\sqrt  2(t-s)\right)ds\Biggr)\1_{U_t^b\leq L}\Biggr].
 \eea
 The idea is to split the 
 integral in the exponent of the Feynman-Kac formula \eqv(fk.5)
 according to the position of the Brownian bridge 
 with respect to the $\o$-wave, i.e. we write, with $x=ut+z$ and $u$ as in the lemma,
 \bea
 \Eq(speed.4)\nonumber
 &&-\int_0^t 
 (1-\tg)\o\left(x\sfrac {t-s}t +\sfrac st y +\zet^t_{0,0}(s)+a-\sqrt  2(t-s)\right)
ds\nonumber\\
&&=
-\int_0^t \left(\1_{\zet_{0,0}^t(s)\geq (\sqrt  2-x/t)(t-s) +K}
+\1_{\zet_{0,0}^t(s)< (\sqrt  2-x/t)(t-s) +K}\right)
\nonumber\\
&&\quad\times
(1-\tg)\o\left(x\sfrac {t-s}t +\sfrac st y +\zet^t_{0,0}(s)+a-\sqrt  2(t-s)\right)ds.
\eea
 On the first indicator function we use that 
 \be\Eq(speed.5)
 \o\left(x\sfrac {t-s}t +\sfrac st y +\zet^t_{0,0}(s)+a-\sqrt  2(t-s)\right)
 \leq 
 \o\left(K+\sfrac st y +a \right)\leq C (K +a)\eee^{-\sqrt 2(K+a)/2},
 \ee
if $y\geq - (K+a)/2$. Choosing $K+a$ large enough, 
we can make this smaller than $\e$, for any $\e>0$. 
 On the second indicator function, we just use that $\o\leq 1$.
Thus \eqv(speed.4) is bounded from below by
\be
\Eq(speed.6)
-(1-\tg)\e T_t^K-(1-\tg)(t-T_t^K)
=-(1-\tg)t-(1-\tg)(1-\e)T_t^K.
\ee
 Inserting this bound into \eqv(fk.5), we get that
 \be
 \Eq(speed.7)
 w(t,x)\geq  \eee^{-\tg L+(\tg(1-\e)-\tb) t}\sfrac {1-\tb/\tg}{\sqrt{2\pi t}}\int_{-\left((K+a)/2 \land b\right)}^0 \eee^{-\frac{(x-y)^2}{2t}}
 \E\left[\eee^{(1-\tg)(1- \e)T^K_t}\1_{U_t^b\leq L}\right]dy.
 \ee
{\bf Case 1:} $2\left(1-\tg\right)\left(1-\e\right)> \left(\sqrt{2}-u\right)^2$.

 We insert the lower bound from Lemma \thv(lem.lap.low) into \eqv(speed.7).
 This gives, if $K>0$, 
 \bea
 \Eq(speed.71)
 w(t,x)&\geq& 
 \eee^{-\tg L+(\tg(1-\e)-\tb) t}\sfrac {1-\tb/\tg}{\sqrt{2\pi t}}\int_{-\left((K+a)/2 \land b\right)}^0 \eee^{-\frac{(x-y)^2}{2t}}dy C \eee^{-\sfrac {K^2}{2L}}\sqrt{L}K \sqrt{\sfrac {1}{\left(\sqrt{2(1-\tg)(1- \e)}-\a\right)2\pi \a}}\nonumber\\\nonumber
 &&\times \eee^{t\frac{\left(\a-\sqrt{2(1-\tg)(1- \e)}\right)^2}2-K\sqrt{2(1-\tg)(1- \e)}(1+\sqrt 2)}\left(1-\eee^{-2b\left(\sqrt{2(1-\tg)(1- \e)}-\a\right)}\right)\\\nonumber
& \geq& \eee^{-\frac{x^2}{2t}}\frac{1}{u}\left(1-\eee^{-\left((K+a)/2 \land b\right) u }\right)
 \eee^{-\tg L+(\tg(1-\e)-\tb) t}\sfrac {1-\tb/\tg}{\sqrt{2\pi t}} C \eee^{-\sfrac {K^2}{2L}}\sqrt{L}K \sqrt{\sfrac {1}{\left(\sqrt{2(1-\tg)(1- \e)}-\a\right)2\pi \a}}\nonumber\\
  &&\times\eee^{t\frac{\left(\a-\sqrt{2(1-\tg)(1- \e)}\right)^2}2-K\sqrt{2(1-\tg)(1- \e)}(1+\sqrt 2)}\left(1-\eee^{-2b\left(\sqrt{2(1-\tg)(1- \e)}-\a\right)}\right),
 \eea
 for $x=ut$.
% In the case when $a$ is so large that we can chose $K<0$, 
%we get instead 
%\Eq(speed.73)
%\bea
% w(t,x)&\geq& 
%   \sfrac {1-\tb/\tg}{\sqrt{2\pi x^2/t}}
%\eee^{-\frac{x^2}{2t}}\left(1-\eee^{-\frac{(K+a)x}{2t}-\frac{(K+a)^2}{8t}}\right)
%\eee^{(\tg(1-\e)-\tb) t} 
%\eee^{t\frac{((\sqrt 2-x/t)-\sqrt{2(1-\tg)(1-\e)})^2}2-K\sqrt{2(1-\tg)(1-\e)}}\nonumber\\
%&&\times
%\sfrac {\sqrt{2(\sqrt 2-x/t)}}{\sqrt{\pi (\sqrt{2(1-\tg)(1-\e)}-(\sqrt 2-x/t))^3}}
%\sqrt{2(1-\tg)(1-\e)}|K| \left(1+o(1)\right).
%\eea
%
%In both cases, 
 The exponential terms are, for $x=ut$,
 \bea
&& \eee^{-t \left(\frac {u^2}{2}  -\tg(1-\e) +\tb-\frac{(\sqrt 2-u-\sqrt{2(1-\tg)(1-\e)})^2}2\right)}
= \eee^{t \left(2-\e-\tb-2\sqrt{(1-\tg)(1-\e)}+\sqrt 2 u\left(\sqrt{(1-\tg)(1-\e)}-1\right)\right)}.
 \eea
 The exponent vanishes if 
 \be\Eq(star.1)
 u=u^*\equiv 
 \sqrt 2-\sfrac {(\tb+\e)\left(\sqrt {(1-\tg)(1-\e)}+1\right)}{\sqrt 2 (\g +\e(1-\tg))}
 =\sqrt 2-\sfrac {\tb\left(\sqrt {(1-\tg)}+1\right)}{\sqrt 2 \g } +O(\e).
 \ee
 and is decreasing in $u$. Hence, for $u<u^*$, this contradicts the hypothesis that 
 $w(t,ut+z)\leq \e$. Hence $u^*$ is a lower bound on the wave speed. This concludes the proof of the first part of Lemma \thv(speed.1).
 
 {\bf Case 2:} $2\left(1-\tg\right)\left(1-\e\right)\leq\left(\sqrt{2}-u\right)^2$. 
 
 In this case we insert the lower bound from Lemma \thv(lem.small) into \eqv(speed.7) and set $L=0$. Hence,
 \bea\Eq(lisa.lan12)
 w(t,ut)&\geq & \eee^{(\tg(1-\e)-\tb) t}\sfrac {1-\tb/\tg}{\sqrt{2\pi t}}\int_{-\left((K+a)/2 \land b\right)}^0 \eee^{-\frac{(ut-y)^2}{2t}}
 \left(1-\eee^{-\frac{b^2}{2t}}\right)dy\nonumber\\
 &\geq &
 \sfrac{b^2}{2t} \eee^{(\tg(1-\e)-\tb) t}\sfrac {1-\tb/\tg}{\sqrt{2\pi t}}\eee^{-\frac{u^2t}{2}}\frac{1}{u}\left(1-\eee^{-\left((K+a)/2 \land b\right) u }\right),
 \eea
 which implies \eqv(lisa.lan14).
Note that on the exponential scale  \eqv(lisa.lan12) is
\be\Eq(lisa.lan13)
 \eee^{-\frac{u^2t}{2}+(\tg(1-\e)-\tb) t},
\ee
implying \eqv(lisa.lan15). This finishes the proof of Lemma \eqv(speed.1).
   \end{proof}

\subsection{Precise control at the tip of the wave}
The estimates obtained on $w(t,x)$ allow for a finer control of the position of the 
wave as a function of $t$. We assume now that $a$ is a constant.
The most serious error term in the bounds comes from the $t O(\e)$ in the exponents. To obtain an error of order $1$, we want to choose $\e =O(1/t)$ in Lemmas \thv(speed.10) and \thv(speed.1). To this end, one needs to choose $K$ large enough, such that the terms on the right hand side of \eqv(speed.15) and \eqv(speed.5) are of order $1/t$. This requires to choose $K\sim c\ln(t)$. We state precise estimates only for the more interesting case $u^*>\sqrt{2\left(\tg-\tb\right)}$, analogous results in the other cases can be obtained in the same way.

\begin{lemma}\TH(fine.1) Assume that $\tb$ and $\tg$ are such that $u^*>\sqrt{2\left(\tg-\tb\right)}$.
Let $a$ be  a constant independent of $t$. Let   $c_+=1/(2-\sqrt 2)$. Then,  there exists a constant $0<C<\infty$, independent of $t$, such that, for $x=u^*+z$, 
\be\Eq(fine.2)
 w(t,u^*t+z)
 \leq C\frac {1}{u^* }t^{ c_+\sqrt{1-\tg}-1/2}|\ln t| \eee^{-z\sqrt 2\left(1-\sqrt{1-\tg}\right)}.
 \ee
 In particular, $w(t,u^*t+z)\leq C/t$ if, for some $\d>0$,  
 \be
 \Eq(fine.3)
 z\geq  z_+\equiv  \sfrac {c_+\sqrt{1-\tg}+1/2+\d}{\sqrt 2\left(1-\sqrt{1-\tg}\right)}{\ln t}.
 \ee
 \end{lemma}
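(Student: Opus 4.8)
The plan is to sharpen the exponential upper bound of Lemma~\thv(speed.10)(i) at the critical speed $u=u^*$ by letting the cut-off $K$ in the Brownian-bridge split depend on $t$, and by tracking the dependence on $z$ explicitly. Throughout, $x=u^*t+z$ with $z\geq 0$ in the range $z=O(\ln t)$ that matters, and $\a_0=\sqrt 2-u^*$. The standing hypothesis $u^*>\sqrt{2(\tg-\tb)}$ is precisely what guarantees $2(1-\tg)>\a_0^2$, so that for $t$ large Lemma~\thv(bb.30) is applicable with $\l$ close to $1-\tg$ and $2\l>\a^2$.

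I would start from the explicit Feynman--Kac upper bound \eqv(fk.8) (where the $w$-term has already been discarded) and decompose the exponent exactly as in \eqv(speed.14)--\eqv(speed.16), along the line $\a(t-s)+K$ with $\a=\sqrt 2-x/t=\a_0-z/t$ and $K=K(t)<0$ chosen so that the profile deficit $1-\o(K+a)$ is of order $1/t$; by the lower-tail asymptotics \eqv(fkpp.15) this forces $K=-c_+\ln t+O(1)$ and an error parameter $\e=O(1/t)$. On $\{\zet^t_{0,0}(s)\geq\a(t-s)+K\}$ one uses $\o\geq 0$, and on its complement one uses $\o\geq 1-\e$ (valid for every $y\leq 0$, since there $\tfrac st y\leq 0$ only pushes the argument of $\o$ further down), to obtain the $y$-independent bound
\be
\int_0^t\Big(1-\tb-(1-\tg)\o(\cdots)\Big)\,ds\;\leq\;\big(\tg-\tb+\e(1-\tg)\big)t+(1-\tg)(1-\e)\,T^K_t .
\ee
The $y$-integral in \eqv(fk.8) then factors off and, by the Gaussian tail estimate together with $x\sim u^*t$, contributes $\tfrac{C}{u^*\sqrt t}\,\eee^{-u^{*2}t/2-u^*z}$, while $\E[\eee^{(1-\tg)(1-\e)T^K_t}]$ is bounded, via Lemma~\thv(bb.30) in the case $K<0$ with $\l=(1-\tg)(1-\e)$, by $\eee^{t(\a-\sqrt{2\l})^2/2-K\sqrt{2\l}}$ times a prefactor of order $|K|$.

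It remains to collect the exponents. The $t$-linear part is $-\tfrac{u^{*2}}{2}+(\tg-\tb)+\tfrac12(\sqrt 2-u^*-\sqrt{2(1-\tg)})^2$, which vanishes by the definition \eqv(speed.3) of $u^*$; the $\e(1-\tg)t$ term and the $O(\e)$ shift in $\sqrt{2\l}=\sqrt{2(1-\tg)(1-\e)}$ are $O(\e t)=O(1)$ and go into $C$. The $z$-linear part is $-u^*z$ (from the Gaussian) plus $-z(\sqrt 2-u^*-\sqrt{2(1-\tg)})$ (from expanding $t(\a-\sqrt{2\l})^2/2$ to first order in $z/t$), adding up to $-z\sqrt 2(1-\sqrt{1-\tg})$ up to $O(z^2/t)=o(1)$. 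What is left is the polynomial factor $\tfrac1{u^*\sqrt t}$ times the prefactor of order $|K|\asymp c_+\ln t$ times the $K$-dependent factor in Lemma~\thv(bb.30); evaluated at this $|K|$ it gives $\tfrac{C}{u^*}\,t^{\,c_+\sqrt{1-\tg}-1/2}|\ln t|$, which is \eqv(fine.2). The corollary \eqv(fine.3) is then immediate: at $z=z_+$ one has $\eee^{-z_+\sqrt 2(1-\sqrt{1-\tg})}=t^{-(c_+\sqrt{1-\tg}+1/2+\d)}$, so the right-hand side of \eqv(fine.2) becomes $C\,t^{-1-\d}|\ln t|\leq C/t$ for $t$ large, and \eqv(fine.2) is decreasing in $z$.

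The step I expect to be the main obstacle is pinning down the precise polynomial power: it is governed by the interplay between the error $\e$ (hence the size of $|K|$, which \eqv(fkpp.15) ties to $\ln t$) and the $|K|$-dependent growth of $\E[\eee^{\l T^K_t}]$ from Lemma~\thv(bb.30), and one has to check that replacing $\o$ by $1-\e$ over the entire sub-front region is not lossy on the exponential scale. If it turns out to be, the remedy is to stratify the sub-front region into shells of logarithmic width, apply the exact exponential decay \eqv(fkpp.15) on each shell, and control the resulting Feynman--Kac correction term; a lesser task is to make the Gaussian estimate and the $z$-expansion uniform over the range of $z$ that is used.
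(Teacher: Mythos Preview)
Your proposal is correct and follows exactly the same approach as the paper: the paper's proof consists of the single sentence ``straightforward from \eqv(speed.17), choosing $K=-c_+\ln t$,'' and you have simply spelled out the details behind that sentence, including the choice of $\e=O(1/t)$ via \eqv(fkpp.15), the application of Lemma~\thv(bb.30) with $K<0$, the vanishing of the $t$-linear exponent at $u^*$, and the explicit $z$-expansion. The concern you raise at the end about the crude bound $\o\geq 1-\e$ is not an issue---the argument works as written and no shell stratification is needed.
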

 \begin{proof} The proof is straightforward from \eqv(speed.17), choosing $K=-c_+\ln t$. \end{proof}
 
 The next lemma shows that this  upper bound is not too bad.
 
 \begin{lemma}\TH(fine.4)
Assume that $\tb$ and $\tg$ are such that $u^*>\sqrt{2\left(\tg-\tb\right)}$. Let $a$ be  a constant independent of $t$. Then, with 
$c_-=\sqrt 2$, there exists a constant $0<C<\infty$, independent of $t$, such that, for $x=u^*t+z$,  $w(t,u^*t+z)\leq C/t$, then 
\be\Eq(fine.2.1)
 w(t,u^*t+z)
 \geq C\frac {1}{u^* }t^{- c_-  \sqrt{2(1-\tg)}(1+\sqrt 2)-1/2}                                                                                                                                                                                                                                                                                                                                                                          |\ln t|^2 \eee^{-z\sqrt 2\left(1-\sqrt{1-\tg}\right)}.
 \ee
 In particular, this is in contradiction with the assumption if 
 \be
 z\leq z_-\equiv -\sfrac{c_-\sqrt{2(1-\tg)}(1+\sqrt 2)-1/2}{\sqrt 2(1-\sqrt{1-\tg})}\ln t.
 \ee
 \end{lemma}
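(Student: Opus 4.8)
This is the lower-bound counterpart of Lemma~\thv(fine.1), so the plan is to rerun the proof of Lemma~\thv(speed.1), Case~1 --- which culminates in the bound \eqv(speed.71), built on the Laplace-transform estimate of Lemma~\thv(lem.lap.low) --- but now in the regime $\e\asymp 1/t$, keeping the dependence on $z$ explicit. Exactly as in Lemma~\thv(fine.1), the role of the small parameter $\e$ in \eqv(speed.5) is taken over by the assumed smallness $w(t,u^*t+z)\le C/t$, so one must choose $K$ just large enough that the $\o$-tail $C(K+a)\eee^{-\sqrt2(K+a)/2}$ on the right of \eqv(speed.5) is of order $1/t$; this forces $K=c_-\ln t+O(1)$ with $c_-=\sqrt2$, which is the source of the constant $c_-$ in the statement. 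The auxiliary parameters $b$ and $L$ in the restriction $\{U_t^b\le L\}$ are taken of polylogarithmic size in $t$ (as already anticipated in the remark following Lemma~\thv(low.3)), optimised so that the competing factors $\eee^{-\tg L}$ arising when bounding $-\tg\int_0^t w\,ds$ (cf.\ \eqv(L.4)) and $\eee^{-K^2/(2L)}\sqrt L\,K$ coming from Lemma~\thv(lem.lap.low) degrade the estimate only by powers of $\ln t$; the surviving prefactor $\sqrt L\,K$ is responsible for the $|\ln t|^2$ in \eqv(fine.2.1).

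Substituting $x=u^*t+z$ into \eqv(speed.71), one uses the definition of $u^*$ in \eqv(star.1) taken at $\e=0$: the coefficient of $t$ in the exponent, namely $(\tg-\tb)-\tfrac{(u^*)^2}{2}+\tfrac12\big(\sqrt2-u^*-\sqrt{2(1-\tg)}\big)^2$, vanishes, leaving only the $O(\e t)=O(1)$ correction at that scale. The remaining $z$-dependence is linear and has two sources: the Gaussian weight $\eee^{-x^2/(2t)}$ contributes $\eee^{-u^*z}$, while expanding $t\big(\a-\sqrt{2(1-\tg)(1-\e)}\big)^2/2$ with $\a=\sqrt2-u^*-z/t$ contributes $\eee^{-(\sqrt2-u^*-\sqrt{2(1-\tg)})z}$; their product is precisely $\eee^{-z\sqrt2(1-\sqrt{1-\tg})}$, as in \eqv(fine.2.1). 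The leftover power of $t$ is $t^{-1/2}$ from the normalisation $1/\sqrt{2\pi t}$ together with $t^{-c_-\sqrt{2(1-\tg)}(1+\sqrt2)}$ from the factor $\eee^{-K\sqrt{2(1-\tg)(1-\e)}(1+\sqrt2)}$ in Lemma~\thv(lem.lap.low), which gives \eqv(fine.2.1). Comparing this lower bound with the assumed upper bound $C/t$, a contradiction occurs as soon as $\eee^{-z\sqrt2(1-\sqrt{1-\tg})}$ beats $t^{\,c_-\sqrt{2(1-\tg)}(1+\sqrt2)-1/2}$ up to polylogarithmic corrections, i.e.\ once $z\le z_-$.

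The main obstacle is making the restriction of the Feynman--Kac expectation to $\{U_t^b\le L\}$ actually do its job in this sharpened setting. The assumption $w(t,u^*t+z)\le C/t$ only controls $w$ to the right of $u^*t+z$, whereas the integrand involves $w(t-s,\cdot)$ at positions that, on the non-excursion part of the bridge, sit near $u^*(t-s)$; one must argue that on the event supporting the lower bound of Lemma~\thv(lem.lap.low) the bridge essentially tracks the line $(\sqrt2-x/t)(t-s)$, so that those positions are in fact well ahead of the $w$-front, where $w$ is negligible by the crude bound \eqv(fk.7) of Lemma~\thv(lwmma-fk.5), and that the logarithmic (hence sub-ballistic) size of $z_-$ and of $b,L$ prevents the shift $z\tfrac{t-s}{t}+\tfrac st y+\zet_{0,0}^t(s)$ from spoiling this. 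Getting this bookkeeping right --- in particular resolving the two conflicting roles of $L$ (wanting $L$ small in $\eee^{-\tg L}$ but $L\gtrsim K$ in $\eee^{-K^2/(2L)}\sqrt L$) --- so that the net loss is exactly $t^{-1/2-c_-\sqrt{2(1-\tg)}(1+\sqrt2)}|\ln t|^2$ is the technical heart of the proof.
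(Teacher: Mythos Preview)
Your approach is exactly the paper's: its proof is the single line ``This is straightforward from \eqv(speed.5), choosing $K=c_-\ln t$,'' i.e.\ rerun Lemma~\thv(speed.1), Case~1 (culminating in \eqv(speed.71) via Lemma~\thv(lem.lap.low)) with $\e$ of order $1/t$ and $K=\sqrt2\,\ln t$, precisely as you outline. Your identification of $c_-$, the $z$-dependence $\eee^{-z\sqrt2(1-\sqrt{1-\tg})}$, and the power $t^{-c_-\sqrt{2(1-\tg)}(1+\sqrt2)-1/2}$ from $\eee^{-K\sqrt{2(1-\tg)}(1+\sqrt2)}$ all match; the bookkeeping you flag in your last paragraph (balancing $\eee^{-\tg L}$ against $\eee^{-K^2/(2L)}$ with $L\sim K$ as in the remark after Lemma~\thv(low.3)) is indeed what the paper's one-line proof leaves implicit.
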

 \begin{proof} This is straightforward from \eqv(speed.5), choosing $K=c_-\ln t$.\end{proof}
 
 Lemma \thv(fine.4) tells us that $w(t, u^* t +z_-)$ is greater than $O(1/t)$. The next lemma tells us that at time of order $\ln t$ later, this will have grown to $O(1)$.

 \begin{lemma}\TH(lem.01)
Let $\e(t)>0$.  Assume that
 \be\Eq(lisa.lan16)
 w(t,ut+z)\geq \e(t) \quad \forall z\leq 0.
 \ee
 Then, for all $\d>0$ sufficiently small, there exists a constant $c$ such that
 \be\Eq(lisa.lan17)
 w\left(t+c\log \left(\e(t)^{-1}\right),ut+z\right)\geq \d \left(1-\frac{\tb}{\tg}\right)>0.
 \ee
 \end{lemma}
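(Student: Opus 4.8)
The idea is to discard the $\o$–term by a one–sided comparison: since $0\le\o\le1$, the reaction coefficient in \eqv(fkpp.18) is at least $\tg-\tb-\tg w$, so $w(t+\cdot,\cdot)$ is a supersolution of the \emph{autonomous} F-KPP equation \eqv(fkpp.10), whose stable state is $1-\tb/\tg$. The lemma then reduces to the classical ``hair-trigger'' statement that K-P-P data bounded below by $\e(t)$ on a half-line invade any level $<1-\tb/\tg$ within a time of order $\log(\e(t)^{-1})$.

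Concretely, let $W$ solve \eqv(fkpp.10) with $W(0,\cdot)=\e(t)\,\1_{(-\infty,ut]}$. By hypothesis \eqv(lisa.lan16) and $w\ge0$ we have $w(t,\cdot)\ge W(0,\cdot)$, and since $\big(1-\tb-(1-\tg)\o-\tg r\big)r\ge(\tg-\tb-\tg r)r$ for every $r\ge0$, the parabolic comparison principle gives $w(t+s,x)\ge W(s,x)$ for all $s\ge0$, $x\in\R$. Two robust features of $W$ enter: \emph{(a)} $W(s,\cdot)$ is nonincreasing in $x$ (monotonicity of the data is preserved by \eqv(fkpp.10)), so for $\delta':=\delta(1-\tb/\tg)$ the superlevel set $\{x:W(s,x)\ge\delta'\}$ is a half-line $(-\infty,\xi(s)]$; and \emph{(b)} $W$ has the spreading property of \eqv(fkpp.10), $\xi(s)\to+\infty$ at asymptotic rate $\sqrt{2(\tg-\tb)}$.

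To control the time scale I would linearise. With $\k:=(\tg-\tb)(1-\delta)>0$, on the set $\{W\le\delta'\}$ one has $(\tg-\tb-\tg W)W\ge\k W$, so there $W$ dominates the solution of $\del_s\Psi=\tfrac12\del_{xx}\Psi+\k\Psi$ with datum $\e(t)\1_{(-\infty,ut]}$, which equals $\e(t)\eee^{\k s}\P_x(B_s\le ut)\ge\tfrac12\e(t)\eee^{\k s}$ for $x\le ut$. Hence $W$ reaches level $\delta'$ away from the abscissa $ut$ after a time $\tfrac1{\k}\log(\e(t)^{-1})(1+o(1))$, and the spreading estimate \emph{(b)} carries $\delta'$ across to $x=ut$ within an extra $O(\sqrt{\log\e(t)^{-1}})$; so there is $s(t)\le\tfrac1{\k}\log(\e(t)^{-1})(1+o(1))$ with $\xi(s)\ge ut$ for every $s\ge s(t)$. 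Combined with \emph{(a)}, $w(t+s,ut+z)\ge W(s,ut+z)\ge\delta'$ for all $z\le0$ and all $s\ge s(t)$; choosing any constant $c>1/\k$ (so that $c\log(\e(t)^{-1})\ge s(t)$ once $\e(t)$ is small, i.e. $t$ large) yields \eqv(lisa.lan17). Alternatively the argument can be carried out through the Feynman-Kac representation \eqv(fk.2), restricting the expectation to Brownian paths along which $w$ has not yet exceeded $\delta'$ (so the exponent accrues at least $\k$ per unit time), but the PDE version is cleaner.

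The one delicate point — and the step that needs the most care — is keeping $s(t)=O(\log\e(t)^{-1})$ with the nonlinearity genuinely present: the bound $(\tg-\tb-\tg W)W\ge\k W$ is valid only below level $\delta'$, so one cannot compare with the linear equation on all of space-time, but must interrupt at the first time $W$ attains $\delta'$ and then use the monotonicity \emph{(a)} together with the finite spreading speed \emph{(b)} to bring that level up to the abscissa $ut$ in negligible extra time. Equivalently, one invokes a quantitative form of the Aronson-Weinberger hair-trigger/spreading theorem for \eqv(fkpp.10); either route is exactly what forces $c$ to be taken larger than $1/\k=1/\big((\tg-\tb)(1-\delta)\big)$.
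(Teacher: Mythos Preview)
Your route differs genuinely from the paper's. The paper works directly with the Feynman--Kac representation for $w(t+s,\cdot)$: it restricts the Brownian bridge to stay above $-b$, uses $\o\le1$ to replace the drift by $\tg-\tb$, and argues by contradiction that if $w<\d(1-\tb/\tg)$ held at all intermediate times and positions near $ut$, then the exponent would accumulate $(1-\d)(\tg-\tb)s$ and the resulting lower bound $\sim\e(t)\eee^{(1-\d)(\tg-\tb)s}/\sqrt s$ would exceed $\d(1-\tb/\tg)$ once $s=c\log(\e(t)^{-1})$ with $c$ large. Your comparison with the autonomous equation \eqv(fkpp.10) is cleaner and more modular: once $w(t+s,\cdot)\ge W(s,\cdot)$ is established (and your supersolution computation $(1-\tg)(1-\o)w\ge0$ is correct), everything reduces to a fact about $W$ alone, with no moving background $\o$ to track.

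The step to watch is the linearisation. Your claim that after $W$ first attains $\d'$ the level is carried to $x=ut$ within an extra $O(\sqrt{\log\e(t)^{-1}})$ is not right as stated: by monotonicity the supremum $\d'$ is first attained only as $x\to-\infty$, and at that time $\t$ the linear minorant gives merely $W(\t,ut)\gtrsim\e(t)^{\d}$, because the linear rate $\k=(1-\d)(\tg-\tb)$ lags the ODE rate $\tg-\tb$ that governs $\t$. Iterating does recover the total bound $s(t)\le\frac1\k\log(\e(t)^{-1})(1+o(1))$, but each iterate costs $O(\log)$, not $O(\sqrt{\log})$. The clean fix is the one you already name: run Feynman--Kac for $W$ and bound $W\le V_{\e(t)}$ (the spatially constant logistic solution) inside the exponent; since $\tg-\tb-\tg V=(\log V)'$, this yields $W(s,ut)\ge\tfrac12 V_{\e(t)}(s)$ in one stroke, hence level $\d'$ at time $\sim\frac1{\tg-\tb}\log(\e(t)^{-1})$. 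That is essentially the paper's computation, carried out on $W$ rather than on $w$; what your comparison step buys is that the contradiction structure and the Brownian-bridge localisation near $ut$ become unnecessary.
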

 \begin{proof}
 Starting from the Feynman-Kac formula, we have
 \bea\Eq(lisa.lan18)
 w(t+s,x)& =&\E_x\left[\eee^{\int_0^s \left(1-\tb-(1-\tg) \o\left(B_r-\sqrt{2}(s-r)\right)-\tg w\left(t+s-r,B_r\right)\right)dr}w\left(t,B_s\right)\right]\nonumber\\
 &=& \int_{-\infty}^{\infty} \frac{\eee^{-\frac{(x-y)^2}{2s}}}{\sqrt{2\pi s}}
 \E\left[\eee^{\int_0^s \left(1-\tb-(1-\tg) \o\left(x+\zet^s_{0,x-y}(r)-\sqrt{2}(s-r)\right)-\tg w\left(t+s-r,x+\zet^s_{0,x-y}(r)\right)\right)dr}w(t,y)\right]\nonumber\\
 &\geq& 
 \e(t) \int_{ut-1}^{ut} \frac{\eee^{-\frac{(x-y)^2}{2s}}}{\sqrt{2\pi s}}
\E\left[ \eee^{(\tg-\tb)s-\tg \int_0^s w\left(t+s-r,x+\frac{y-x}{s}r+\zet_{0,0}^s(r)\right)dr}\right],
 \eea
 where we used that $\o\leq 1$. Plugging in $x=ut+z$ and restricting the Brownian bridge to be larger than $-b$ for some $b>0$, we get that \eqv(lisa.lan18) is bounded from below by
 \be\Eq(lisa.lan19)
  \e(t) \int_{ut-1}^{ut} \frac{\eee^{-\frac{(ut+z-y)^2}{2s}}}{\sqrt{2\pi s}}
\E\left[ \eee^{(\tg-\tb)s-\tg \int_0^s w\left(t+s-r,(ut +z)\frac{s-r}{s}+\frac{y}{s}r+\zet_{0,0}^s(r)\right)dr}\1_{U_s^b=0} \right].
 \ee
 Now we assume that for all $0\leq r\leq s$,
 \be\Eq(lisa.lan20)
 w(t+s-r, ut+\tilde z)<\d \left(1-\frac{\tb}{\tg}\right), \quad \forall z-b <\tilde z<-b.
 \ee
 Then \eqv(lisa.lan19) is bounded from below by
 \bea\Eq(lisa.lan21)
&&   \e(t) \int_{ut-1}^{ut} \frac{\eee^{-\frac{(ut+z-y)^2}{2s}}}{\sqrt{2\pi s}}
 \eee^{(\tg-\tb)s-\tg\d \left(1-\frac{\tb}{\tg}\right)}  \P\left[ U_s^b =0 \right]\nonumber\\
&\geq&
 \e(t)  \frac{\eee^{-\frac{z^2}{2s}}}{\sqrt{2\pi s}}
 \eee^{(1-\d)(\tg-\tb)s} \left(1-\eee^{-b^2/2s}\right), 
 \eea
 where we used \eqv(lisa.lan11) to bound $\P\left[ U_s^b =0 \right]$. Let 
 $s=c\log\left(\e(t)^{-1}\right)$, 
 \be\Eq(lisa.lan22)
  \frac{\eee^{\frac{z^2}{2c\log\left(\e(t)\right)}}}{\sqrt{2\pi c\log\left(\e(t)^{-1}\right) }}
 \left(\e(t)\right)^{1-(1-\d)(\tg-\tb)c} \left(1-\eee^{\frac{b^2}{2c\log\left(\e(t)\right)}}\right).
 \ee
 Choosing $c$ large enough, \eqv(lisa.lan22) contradicts Assumption  \eqv(lisa.lan20). Hence, the claim of the lemma follows.
 \end{proof}
 
\begin{proof}[Proof of Theorem \thv(main)] Theorem \thv(main) follows directly from Lemmata \thv(fine.1), \thv(fine.4),  and \thv(lisa.lan17) in the case $u_c>\sqrt{2\left(\tg-\tb\right)}$. The analogous results when  $u_c=\sqrt{2\left(\tg-\tb\right)}$ are left to the reader.
 \end{proof}

\section{Discussion} 
In this paper we have used the Feynman-Kac representation to derive the 
speed of advance of a hitch-hiking subpopulation within an advancing population.
Apart from the fact that this allowed fairly sharp control of the precise behaviour of 
the wave fronts, the method provides a very clear intuitive understanding of the reason
for the acceleration in an advancing population compared to a fully established
one. Namely, the acceleration is driven by rare excursion of a Brownian bridge reaching ahead of the $B$-population. Translating this back into an underlying individual based model, heuristically this may be interpreted as having excursions of $A$ particles into the 
empty space ahead of the bulk wave taking advantage of higher growth rate in the absence of competition. 

Technically, we took advantage of the special features of the model that allowed to 
reduce the analysis to that of a scalar F-KPP equation with time-dependent parameters.
This is a delicate property that gets spoiled already if the diffusion coefficients of the 
two types are different. An explicit useable Feynman-Kac representation 
for systems of pdes does not exist. Still, we are optimistic that the Feynman-Kac representation (used for each one-dimensional component of the system) can be used 
in such situations. This is subject of ongoing research.

\appendix

\section{The Laplace method with prefactor}

To evaluate the asymptotic behaviour of integrals, we use Laplace's method. Below we state for convenience the results we use. Proofs can be found in many places in the literature, e.g. \cite{fedoryuk}.

 \begin{lemma}\TH(lem.laplace)
 Let $f:(0,1)\to \R$ be a twice differentiable function with a unique maximiser at $s^*$ and $f''(s^*)<0$. Moreover, let $P(s,t)$ be a rational function in $s$ and $t$. 
 Then 
 \be\Eq(lap.1)
\lim_{t\uparrow\infty}\frac{ \sqrt{-tf''(s^*)} \int_0^1 P(s,t) \eee^{t f(s)} ds}{\sqrt{2 \pi} P(s^*,t) \eee^{tf(s^*)} }=1.
 \ee
 \end{lemma}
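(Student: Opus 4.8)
The plan is to run the standard localisation argument behind Laplace's method, so I would first record the two ingredients that make it work. Since $s^*$ is the \emph{unique} maximiser of $f$ on $(0,1)$ and, in every situation where the lemma is applied, $f(s)$ tends to $-\infty$ (or at least stays strictly below $f(s^*)$) as $s$ approaches the endpoints $0$ and $1$, for each $\epsilon>0$ there is $c(\epsilon)>0$ with $f(s)\leq f(s^*)-c(\epsilon)$ whenever $|s-s^*|\geq\epsilon$. Second, $P$ being rational in $s$ and $t$, for $t$ large it has no pole at $s^*$, the function $t\mapsto P(s^*,t)$ is a nonzero rational function of $t$, so $|P(s^*,t)|$ decays at most polynomially, and $P(s,t)$ grows at most polynomially in $t$ on a fixed neighbourhood of $s^*$. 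In the concrete situations where the lemma is used one always has $P(s,t)=t^{-\beta}Q(s)$ with $\beta$ a fixed exponent and $Q$ a fixed rational function with no pole at $s^*$; I will invoke this form when it simplifies the prefactor bookkeeping.

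Next I would split $\int_0^1=\int_{|s-s^*|\geq\epsilon_0}+\int_{|s-s^*|<\epsilon_0}$ for a small $\epsilon_0$ on which $P(\cdot,t)$ has no pole. On the outer piece, $|P(s,t)\eee^{tf(s)}|\leq C(1+t)^N\eee^{tf(s^*)}\eee^{-c(\epsilon_0)t}$; dividing by $\sqrt{2\pi}\,P(s^*,t)\eee^{tf(s^*)}$ and multiplying by $\sqrt{-tf''(s^*)}$ leaves a quantity bounded by $C't^{1/2}(1+t)^N\eee^{-c(\epsilon_0)t}/|P(s^*,t)|$, which tends to $0$ because $|P(s^*,t)|$ decays at most polynomially. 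So the outer piece does not affect the limit and it remains to treat the inner piece.

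For the inner piece I would factor out $P(s^*,t)\eee^{tf(s^*)}$ and substitute $s=s^*+u/\sqrt{-tf''(s^*)}$, which turns the ratio in \eqv(lap.1) into
\be
\frac{1}{\sqrt{2\pi}}\int_{|u|<\epsilon_0\sqrt{-tf''(s^*)}}\frac{P\!\left(s^*+u/\sqrt{-tf''(s^*)},\,t\right)}{P(s^*,t)}\;\eee^{\,t\left(f(s)-f(s^*)\right)}\,du
\ee
up to the (vanishing) outer contribution. Because $f$ is twice differentiable at $s^*$ we have $f(s)-f(s^*)=\tfrac12 f''(s^*)(s-s^*)^2+o((s-s^*)^2)$, so for each fixed $u$ the exponent $t(f(s)-f(s^*))$ converges to $-u^2/2$, while the prefactor ratio converges to $1$; shrinking $\epsilon_0$ if necessary one also has $t(f(s)-f(s^*))\leq-u^2/4$ on the whole range together with a $t$-uniform bound on the prefactor ratio, so dominated convergence yields the limit $\frac{1}{\sqrt{2\pi}}\int_{\R}\eee^{-u^2/2}\,du=1$, which is \eqv(lap.1).

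The only genuinely delicate point is the prefactor ratio: one needs $P\!\left(s^*+u/\sqrt{-tf''(s^*)},t\right)/P(s^*,t)\to1$ locally uniformly in $u$, i.e.\ that near $s^*$ the function $P$ neither (nearly) vanishes nor blows up while its $s$-dependence drifts with $t$. For a fixed rational $P$ this is routine — the leading powers of $t$ in numerator and denominator are fixed and the residual dependence on $s-s^*$ is through polynomials whose coefficients are $t$-uniformly controlled once $t$ is large — and for $P(s,t)=t^{-\beta}Q(s)$ it is immediate, since then the ratio equals $Q\!\left(s^*+u/\sqrt{-tf''(s^*)}\right)/Q(s^*)$, which tends to $1$ by continuity of $Q$ at $s^*$. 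A reference for the general statement is \cite{fedoryuk}.
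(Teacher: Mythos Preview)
The paper does not actually prove this lemma: it merely states it in the appendix and writes ``Proofs can be found in many places in the literature, e.g.\ \cite{fedoryuk}.'' Your proposal supplies precisely the standard localisation argument for Laplace's method that such a reference would contain --- split off an exponentially negligible outer piece, rescale the inner piece by $\sqrt{-tf''(s^*)}$, and pass to the Gaussian limit by dominated convergence --- so there is nothing to compare against, and your sketch is correct (with the caveats you yourself flag about poles of $P$ and behaviour at the endpoints, which the lemma as stated leaves implicit).
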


We also need a similar statement for integrals that get their contribution from the boundary of the domain of integration. 

\begin{lemma}\TH(lem.laplace2)
 Let $f:(y,1)\to \R$ be a twice differentiable function, which is monotone decreasing. Moreover, let $P(s,t)$ be a polynomially bounded  function in $s$ and $t$ in $[y,1]$. 
 %\be\Eq(lap.8)
% \frac{
%tf'(y) \int_0^1 P(s,t) \eee^{-t f(s)} ds}{ P(y,t) \eee^{-tf(y)}} \rightarrow 1,
% \ee
 If, for some $\d\geq 0$,  $\lim_{x\downarrow y}(y-x)^{-\d} P(y,t) =c(t,y)$, then
 \be
\lim_{t\uparrow\infty}(tf'(y))^{1+\d} \eee^{t f(y)}\int_y^1 \frac{P(s,t)}{c(t,y)} \eee^{-t f(s)} ds = \G(1+\d).
\ee
 \end{lemma}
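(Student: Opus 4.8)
The plan is to run the endpoint version of Laplace's method, the dominant contribution coming from the endpoint $s=y$ where $\eee^{-tf(s)}$ is maximal and where $f'(y)\neq 0$. Writing $Q(s,t)\equiv P(s,t)/c(t,y)$, the hypothesis reads $Q(s,t)\sim (s-y)^{\d}$ as $s\downarrow y$, and it suffices to show that $(tf'(y))^{1+\d}\eee^{tf(y)}\int_y^1 Q(s,t)\eee^{-tf(s)}\,ds\to\G(1+\d)$.

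First I would truncate the integral to a small fixed neighbourhood $[y,y+\eta]$. On $[y+\eta,1]$ monotonicity gives $f(s)-f(y)\geq\rho_\eta>0$, while $P$, hence $Q$, is only polynomially large there, so $\bigl|\int_{y+\eta}^1 Q(s,t)\eee^{-tf(s)}\,ds\bigr|\leq \mathrm{poly}(t)\,\eee^{-tf(y)-t\rho_\eta}$; after multiplication by $(tf'(y))^{1+\d}\eee^{tf(y)}$ this is $\mathrm{poly}(t)\,\eee^{-t\rho_\eta}=o(1)$, hence negligible against the claimed main term, which is only polynomially smaller than $\eee^{-tf(y)}$. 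On $[y,y+\eta]$, shrinking $\eta$ so that $f'>0$ throughout, the map $\tau\mapsto s(\tau)\equiv f^{-1}(f(y)+\tau)$ is a $C^1$ diffeomorphism onto $[y,y+\eta]$ with $s(\tau)-y=\tau/f'(y)+O(\tau^2)$ and $ds/d\tau=1/f'(s(\tau))\to 1/f'(y)$, so with $\tau_\eta\equiv f(y+\eta)-f(y)$,
\be
\int_y^{y+\eta}Q(s,t)\eee^{-tf(s)}\,ds=\eee^{-tf(y)}\int_0^{\tau_\eta}\frac{Q(s(\tau),t)}{f'(s(\tau))}\,\eee^{-t\tau}\,d\tau .
\ee

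Next I would rescale $\tau=r/t$, turning the right-hand side into $\tfrac1t\eee^{-tf(y)}\int_0^{t\tau_\eta}Q(s(r/t),t)\,\eee^{-r}/f'(s(r/t))\,dr$. For fixed $r$ one has $s(r/t)\downarrow y$ with $s(r/t)-y=\bigl(r/(tf'(y))\bigr)(1+o(1))$, so the integrand converges pointwise to $(f'(y))^{-(1+\d)}t^{-\d}\,r^{\d}\eee^{-r}$; using a dominating function of the form $C\,r^{\d}\eee^{-r}$ supplied by the polynomial bound on $P$ and dominated convergence, $\int_0^{t\tau_\eta}(\cdots)\,dr=(f'(y))^{-(1+\d)}t^{-\d}\,(\G(1+\d)+o(1))$. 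Assembling the two pieces gives $\int_y^1 Q(s,t)\eee^{-tf(s)}\,ds=t^{-(1+\d)}(f'(y))^{-(1+\d)}\eee^{-tf(y)}\,\G(1+\d)\,(1+o(1))$, and multiplying by $(tf'(y))^{1+\d}\eee^{tf(y)}$ yields the claim.

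The one delicate step is the third one: I must control $Q(s(r/t),t)=P(s(r/t),t)/c(t,y)$ on a window $s-y=O(1/t)$ that shrinks as $t\to\infty$, whereas the hypothesis only provides, for each \emph{fixed} $t$, the limit $(x-y)^{-\d}P(x,t)\to c(t,y)$ as $x\downarrow y$. The clean resolution is to observe that in every application $P$ is of the explicit form $P(s,t)=(s-y)^{\d}\tilde P(s,t)$ with $\tilde P$ jointly continuous near $\{y\}\times[t_0,\infty)$ and there bounded above and below — in \eqv(lisa.lan1), $\tilde P$ is a product of powers of $s+\mathrm{const}$ and $1-s-\mathrm{const}$ times a Gaussian factor — so that $\tilde P(s(r/t),t)/\tilde P(y,t)\to1$ uniformly for $r$ in compacts and the required domination is immediate. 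One may either add this structural assumption to the statement of the lemma or verify it at each point of use; the truncation bound, the change of variables, and the Gamma-integral evaluation are then entirely routine.
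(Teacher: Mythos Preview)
The paper does not give its own proof of this lemma; it simply states the result and refers to Fedoryuk for the Laplace method. Your argument is precisely the standard endpoint Laplace method one finds there --- truncate away from the endpoint, linearise $f$ via the change of variable $\tau=f(s)-f(y)$, rescale by $t$, and recognise the Gamma integral --- so in that sense you are doing exactly what the cited reference does.

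Your identification of the one genuinely delicate point is apt. As written, the hypothesis $\lim_{x\downarrow y}(x-y)^{-\d}P(x,t)=c(t,y)$ is only pointwise in $t$, which is formally insufficient to pass to the limit on the shrinking window $s-y=O(1/t)$; the dominated-convergence bound $Q(s(r/t),t)\leq C r^{\d}$ does not follow from a mere polynomial bound on $P$. Your resolution --- observing that in the actual applications (e.g.\ \eqv(lisa.lan1)) $P$ factors as $(s-y)^{\d}\tilde P(s,t)$ with $\tilde P$ jointly continuous and bounded near the endpoint --- is the right one, and is implicitly what the paper relies on when it invokes the lemma. One could also note that the lemma statement itself has some sign and variable slips ($f$ should be \emph{increasing} for the endpoint $y$ to dominate $\eee^{-tf}$, and $(y-x)^{-\d}P(y,t)$ should read $(x-y)^{-\d}P(x,t)$); your proof uses the intended reading.
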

%\begin{proof}
%
%We first proof a lower bound.  Let $\e>0$. Then,   by continuity of $f'$ and Taylor's theorem, there exists $\d>0$ such that 
%\be\Eq(lap.9)
%f(s)\leq f(y)+ \left(f'(y)+\e\right) \left(s-y\right),
%\ee 
%for all $|s-y|<\d$. Then,
%\be\Eq(lap.10)
%\int_y^{y+\d}  P(s,t) \eee^{-t f(s)} ds \geq e^{-t f(y)}\int_y^{y+\d}  P(s,t) \eee^{-t \left(f'(y)+\e\right) \left(s-y\right)}ds  
%\ee
%Due to the exponential decay, similarly to the proof of \thv(lem.laplace), \eqv(lap.10) is equal to
%\bea\Eq(lap.11)
%&& P(y,t) e^{-t f(y)}   \int_y^{y+t^{-\eta}}  P(s,t) \eee^{-t \left(f'(y)+\e\right) \left(s-y\right)}ds(1+o(1))\nonumber\\
% &=&P(y,t) e^{-t f(y)} \frac{1}{t\left(f'(y)+\e\right)  }\left( 1 - e^{-t \left(f'(y)+\e\right)} t^{-\eta} \right)(1+o(1)) ,  
%\eea
%for any $1>\eta>0$. This implies the desired lower bound. For the upper bound we note that  , as the proof of Lemma \thv(lem.laplace), for each $\d>0$ the integral from ${y+\d}$ to $1$ does not contribute to the limit, as the $f$ takes its maximum in $y$. 
%Let $\e>0$. Then,   by continuity of $f'$ and Taylors theorem, there exists $\d>0$ such that 
%\be\Eq(lap.12)
%f(s)\geq f(y)+ \left(f'(y)-\e\right) \left(s-y\right).
%\ee 
%Repeating the computations in \eqv(lap.10) and \eqv(lap.11) with $\e$ replaced by $-\e$, yields the desired upper bound. As the upper and lower bound hold for any $\e>0$ the claim of Lemma \thv(lem.laplace2) follows. 
%\end{proof}

%\bibliographystyle{abbrv}
%
%\bibliography{waves-ref.bib}
%
%

\end{document}